\numberwithin{equation}{section}
\newtheorem{theorem}{Theorem}[section]
\newtheorem{proposition}[theorem]{Proposition}
\newtheorem{corollary}[theorem]{Corollary}
\newtheorem{lemma}[theorem]{Lemma}
\newtheorem{problem}[theorem]{Problem}
\theoremstyle{definition}
\newtheorem{definition}[theorem]{Definition}
\newtheorem{remark}[theorem]{Remark}
\DeclareMathOperator{\skel}{Skel}
\DeclareMathOperator\lk{\mathrm{lk}}
\newcommand{\R}{{\mathbb R}}
\newcommand{\C}{{\mathcal C}}
\title{Highly neighborly centrally symmetric spheres}
\author{
	Isabella Novik\thanks{Research is partially\textsl{}
		supported by NSF grant DMS-1664865, and by Robert R.~\&  Elaine F.~Phelps Professorship in Mathematics. }\\
	\small Department of Mathematics\\[-0.8ex]
	\small University of Washington\\[-0.8ex]
	\small Seattle, WA 98195-4350, USA\\[-0.8ex]
	\small \texttt{novik@uw.edu}
	\and 
	Hailun Zheng \\
	\small Department of Mathematics\\[-0.8ex]
	\small University of Michigan\\[-0.8ex]
	\small Ann Arbor, Michigan 48109-1043, USA \\[-0.8ex]
	\small \texttt{hailunz@umich.edu}
}
\begin{document}
	\maketitle
	
	\begin{abstract} 
		In 1995, Jockusch constructed an infinite family of centrally symmetric $3$-dimensional simplicial spheres that are cs-$2$-neighborly. Here we generalize his construction and show that for all $d\geq 3$ and $n\geq d+1$, there exists a centrally symmetric $d$-dimensional simplicial sphere with $2n$ vertices that is cs-$\lceil d/2\rceil$-neighborly. This result combined with work of Adin and Stanley completely resolves the upper bound problem for centrally symmetric simplicial spheres.
	\end{abstract}
	
	\section{Introduction}
	In this paper we construct highly neighborly centrally symmetric $d$-dimensional spheres with an arbitrarily large even number of vertices. A simplicial complex is {\em centrally symmetric} ({\em cs}, for short) if it possesses a free simplicial involution. We refer to a pair of vertices that form an orbit under this involution as antipodes or antipodal vertices. One large class of examples is given by the boundary complexes of cs simplicial polytopes: a polytope $P$ is cs if $P=-P$; the involution, in this case, is induced by the map $v\mapsto-v$ on the vertices.
	
	A (non-cs) simplicial complex is called {\em $\ell$-neighborly} if every $\ell$ of its vertices form a face. For instance, the boundary complex  of the $(d+1)$-dimensional simplex is $(d+1)$-neighborly, while the boundary complex of the $(d+1)$-dimensional cyclic polytope with $n\geq d+3$ vertices, $C(d+1,n)$, is $\lceil d/2\rceil$-neighborly. The interest in neighborly polytopes arises from the celebrated upper bound theorem \cite{McMullen70, Stanley75} asserting that among all  $d$-dimensional simplicial spheres  with $n$ vertices, the boundary complex of $C(d+1,n)$ simultaneously maximizes all the face numbers. The cyclic polytope in this statement can be replaced with any $\lceil d/2\rceil$-neighborly $d$-dimensional simplicial sphere --- the objects that abound in nature, see  \cite{Padrol-13}.
	
	This notion of $\ell$-neighborliness can be easily modified for the class of cs complexes: a cs simplicial complex $\Delta$ is {\em cs-$\ell$-neighborly} if every set of $\ell$ of its vertices, no two of which are antipodes, is a face of $\Delta$. Furthermore, the same notion applies to any (not necessarily cs) full-vertex subcomplex $\Gamma$ of $\Delta$. For instance, the boundary complex of the $(d+1)$-dimensional cross-polytope is cs-$(d+1)$-neighborly, while the boundary complex of the same cross-polytope with one facet removed is cs-$d$-neighborly.  Adin \cite{Adin91} and Stanley (unpublished) proved that in a complete analogy with Stanley's upper bound theorem, among all cs simplicial spheres of dimension $d$ and with $2n$ vertices, a cs-$\lceil d/2\rceil$-neighborly sphere simultaneously maximizes all the face numbers, {\em assuming such a sphere exists}. (See \cite{N03,N05} for an extension of this result to cs simplicial manifolds.)

	Thus, two natural questions to consider are:  do there exist cs simplicial polytopes of dimension $d+1\geq 4$ with arbitrarily many vertices whose boundary complexes are cs-$\lceil d/2\rceil$-neighborly? Do there exist cs simplicial spheres of dimension $d\geq 3$ with arbitrarily many  vertices that are cs-$\lceil d/2\rceil$-neighborly? 
	
	The answer to the first question was given by McMullen and Shephard \cite{McMShep} more than fifty years ago: extending the $4$-dimensional case worked out by Gr\"unbaum \cite[p.~116]{Gru-book}, they proved that while there do exist cs $(d+1)$-dimensional polytopes with $2(d+2)$ vertices that are cs-$\lceil d/2\rceil$-neighborly, a cs  $(d+1)$-dimensional polytope with $2(d+3)$ vertices cannot be more than cs-$\lfloor (d+2)/3\rfloor$-neighborly. Moreover, according to \cite{LinNov}, a cs $(d+1)$-dimensional polytope with more than $2^{d+1}$ vertices cannot be even cs-$2$-neighborly. 
	
	The second question remained a total mystery until in 1995 Jockusch \cite{Jockusch95} showed that, in a sharp contrast with the situation for cs $4$-dimensional polytopes, for {\em every} value of $n\geq 4$, there   exists a cs $3$-dimensional sphere with $2n$ vertices that is cs-$2$-neighborly. In addition, for $d\leq 6$, Lutz \cite{Lutz} found (by a computer search)  several cs $d$-dimensional spheres with $2(d+3)$ vertices that are  cs-$\lceil d/2\rceil$-neighborly.
	
	Here, we build on work of Jockusch to provide a complete answer to the second question:  for {\em all} values of $d\geq 3$ and $n\geq d+1$, there exists a cs $d$-dimensional combinatorial sphere with $2n$ vertices, $\Delta^{d}_{n}$, that is cs-$\lceil d/2\rceil$-neighborly. Thus, our result combined with work of Adin and Stanley completely resolves the upper bound problem for cs simplicial spheres. (At the same time, there is not even a plausible upper bound conjecture for cs polytopes.) 
	
	Our construction is by induction on both $d$ and $n$. The key idea in constructing $\Delta^{d}_{n+1}$ from $\Delta^{d}_n$ is to define for each $i\leq \lceil d/2 \rceil -1$, an auxiliary $d$-dimensional ball with $2n$ vertices, $B^{d,i}_n\subset\Delta^{d}_{n}$, that is both $i$-stacked and cs-$i$-neighborly, see Sections 2 and 3 for definitions. For $d=3$, our construction reduces to Jockusch's construction. It is worth mentioning that using the same balls in fact allows us to construct for any $\ell\leq \lceil d/2\rceil$, a family of cs $d$-dimensional combinatorial spheres that are cs-$\ell$-neighborly but not cs-$(\ell+1)$-neighborly.
	
	The structure of the paper is as follows. In Section 2 we discuss several basic facts and definitions pertaining to simplicial complexes and PL topology. 
Sections 3 is a hard duty section that contains our inductive construction and the proof that this construction works. In Section 4, we study some other properties of the spheres $\Delta^d_n$.  We close in Section 5 with several remarks and open questions.

\section{Preliminaries}
In this section we review some background related to simplicial complexes and PL topology.  For all undefined terminology we refer the readers to \cite{Bjorner95}.

A \emph{simplicial complex} $\Delta$ with vertex set $V=V(\Delta)$ is a collection of subsets of $V$ that is closed under inclusion and contains all singletons: $\{v\}\in\Delta$ for all $v\in V$. The elements of $\Delta$ are called {\em faces}. The \emph{dimension of a face} $\tau\in\Delta$ is $\dim\tau:=|\tau|-1$. The \emph{dimension of $\Delta$}, $\dim\Delta$, is the maximum dimension of its faces. A face of a simplicial complex $\Delta$ is a \textit{facet} if it is maximal w.r.t.~inclusion. We say that $\Delta$ is \emph{pure} if all facets of $\Delta$ have the same dimension.

Let $V$ be a set of size $d+1$. Two fundamental examples of pure simplicial complexes are the $d$-dimensional simplex on $V$, $\overline{V}:=\{\tau \ : \ \tau\subseteq V\}$, and its boundary complex, $\partial\overline{V}:=\{\tau : \tau\subsetneq V\}$.


To simplify the notation, for a face that is a vertex, an edge, or a triangle, we write $v$, $uv$, and $uvw$ instead of $\{v\}$, $\{u,v\}$, and $\{u,v,w\}$, respectively. We denote by $(v_1,v_2,\dots, v_n)$ a path with edges $v_1v_2, v_2v_3,\dots, v_{n-1}v_n$ if $v_n\neq v_1$, or a cycle if $v_n=v_1$. In particular, a path of length one $(v_1, v_2)$ is a $1$-dimensional simplex, so it can also be written as $\overline{v_1v_2}$. 

Let $\Delta$ be a simplicial complex. The \emph{$k$-skeleton} of $\Delta$, $\skel_k(\Delta)$, is the subcomplex of $\Delta$ consisting of all faces of dimension $\leq k$. If $\tau$ is a face of $\Delta$, then 
the  {\em link of $\tau$ in $\Delta$} is the following subcomplex of $\Delta$:
\[
\lk(\tau,\Delta):= \{\sigma\in \Delta \ : \ \sigma\cap\tau=\emptyset \mbox{ and } \sigma\cup\tau\in\Delta\}.
\]
Finally, if $\Delta$ is pure and $\Gamma$ is a full-dimensional pure subcomplex of $\Delta$, then $\Delta\backslash \Gamma$ is the subcomplex of $\Delta$ generated by those facets of $\Delta$ that are not in $\Gamma$. 

If $\Delta$ and $\Gamma$ are simplicial complexes on disjoint vertex sets, then the \textit{join} of $\Delta$ and $\Gamma$ is the simplicial complex $\Delta*\Gamma = \{\sigma \cup \tau \ : \ \sigma \in \Delta \text{ and } \tau \in \Gamma\}$. Two important special cases are the \emph{cone} over $\Delta$ with apex $v$ defined as  the join $\Delta\ast\overline{v}$ and the \emph{suspension} of~$\Delta$, $\Sigma\Delta$, defined as the join of $\Delta$ with a $0$-dimensional sphere. In the rest of the paper, we write $\Delta \ast v$ in place of  $\Delta\ast\overline{v}$. 

Let $V$ be a set of size $d+1$ and let $\overline{V}$ be the $d$-dimensional simplex on $V$. A \emph{combinatorial $d$-ball} is a simplicial complex PL homeomorphic to $\overline{V}$. Similarly, a \emph{combinatorial $(d-1)$-sphere} is a simplicial complex PL homeomorphic to $\partial\overline{V}$.

One advantage of working with combinatorial balls and spheres is that they satisfy several natural properties that fail in the class of simplicial balls and spheres. For instance, if $\Delta$ is a combinatorial $d$-sphere and $\Gamma\subset\Delta$ is a combinatorial $d$-ball, then so is $\Delta\backslash \Gamma$, see \cite{Hudson}. Furthermore, the link of any face in a combinatorial sphere is a combinatorial sphere. On the other hand, the link of a face $\tau$ in a combinatorial $d$-ball $B$ is either a combinatorial ball or a combinatorial sphere; in the former case we say that $\tau$ is  a \emph{boundary face} of $B$, and in the latter case that $\tau$ is an \emph{interior face} of $B$. The \emph{boundary complex} of $B$, $\partial B$, is the subcomplex of $B$ that consists of all boundary faces of $B$; in particular, $\partial B$ is a combinatorial $(d-1)$-sphere.

The following lemma summarizes a few basic but very useful properties of combinatorial balls. We only prove the last part; the proofs of the  first two parts along with additional information on PL topology can be found in \cite{Hudson}, see also \cite{Lickorish}.
\begin{lemma}\label{lm: interior faces of join} \qquad
	\begin{enumerate} 
		\item Let $B$ and $B'$ be combinatorial balls. Then $B*B'$ is a combinatorial ball; its interior faces are sets of the form $F\cup F'$, where $F$ is an interior face of $B$ and $F'$ is an interior face of $B'$. Furthermore, the cone over $\partial B$, $\partial B * v$, is a combinatorial ball; its boundary complex is $\partial B$.
		\item Let $B$ and $B'$ be combinatorial $d$-balls such that $B\cap B'\subseteq \partial B \cap \partial B'$ is a combinatorial $(d-1)$-ball. Then $B\cup B'$ is a combinatorial $d$-ball. The set of interior faces of $B\cup B'$ consists of the interior faces of $B$, the interior faces of $B'$, and the interior faces of $B\cap B'$.
		\item Assume that combinatorial balls $B$ and $B'$ are full-dimensional subcomplexes of a combinatorial sphere $\Gamma$. If $B$ and $B'$ share a common interior face, then they share a facet.
	\end{enumerate}
\end{lemma}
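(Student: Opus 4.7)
The plan is to fix a common interior face $\tau$ of $B$ and $B'$ and argue that $\lk(\tau,B)=\lk(\tau,\Gamma)=\lk(\tau,B')$. Once this is in hand, taking any facet $\sigma$ of $\lk(\tau,\Gamma)$ gives a $d$-dimensional simplex $\sigma\cup\tau$ that is a face of both $B$ and $B'$; since $B$ and $B'$ have dimension $d$, such a $d$-face is automatically a facet of each, producing the required common facet.

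The set-up for the links is routine. Since $\Gamma$ is a combinatorial $d$-sphere, $\lk(\tau,\Gamma)$ is a combinatorial $k$-sphere with $k:=d-\dim\tau-1$. Since $\tau$ is an interior face of the combinatorial $d$-ball $B$, $\lk(\tau,B)$ is also a combinatorial $k$-sphere, and the inclusion $B\subseteq\Gamma$ gives $\lk(\tau,B)\subseteq\lk(\tau,\Gamma)$. The same applies with $B'$ in place of $B$. So everything reduces to the purely combinatorial claim that a combinatorial $k$-sphere which sits as a subcomplex of another combinatorial $k$-sphere must coincide with it.

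I would prove this subsidiary claim using the fact that combinatorial spheres are closed, strongly connected pseudomanifolds, so every codimension-one face lies in exactly two facets. Pick any facet $F_0$ of $\lk(\tau,B)$; being top-dimensional in a subcomplex of the same dimension, it is also a facet of $\lk(\tau,\Gamma)$. For an arbitrary facet $F$ of $\lk(\tau,\Gamma)$, choose a facet-adjacency path $F_0,F_1,\ldots,F_m=F$ in $\lk(\tau,\Gamma)$ and induct on $i$: if $F_i\in\lk(\tau,B)$, then the codimension-one face $F_i\cap F_{i+1}$ lies in $\lk(\tau,B)$ and hence in exactly two facets of $\lk(\tau,B)$, one being $F_i$; the other is a facet of $\lk(\tau,\Gamma)$ adjacent to $F_i$ across $F_i\cap F_{i+1}$, and since $\lk(\tau,\Gamma)$ is a pseudomanifold, this forces it to equal $F_{i+1}$. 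The tiny dimension cases $k=-1$ (when $\tau$ is already a facet of $\Gamma$ and both links equal $\{\emptyset\}$) and $k=0$ (two-vertex subcomplex of a two-vertex $0$-sphere) are immediate.

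The main obstacle is really just this equality-of-links step, which rests on the pseudomanifold machinery for combinatorial spheres. Once that is established, the rest of the proof is a one-line extraction of the common facet $\sigma\cup\tau$.
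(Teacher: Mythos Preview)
Your argument is correct and follows essentially the same route as the paper: fix a common interior face $\tau$, observe that $\lk(\tau,B)$ and $\lk(\tau,\Gamma)$ are combinatorial spheres of the same dimension with one contained in the other, conclude they coincide (and likewise for $B'$), and then read off a common facet containing $\tau$. The only difference is that the paper simply asserts the equality of the two links, whereas you spell out the pseudomanifold/strong-connectivity justification for why a combinatorial $k$-sphere cannot be a proper subcomplex of another.
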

\begin{proof}
	For part 3, let $\tau$ be a common interior face of $B$ and $B'$. Assume that $\dim\Gamma=d$ and $\dim\tau=k$. Then the link of $\tau$ in $B$ is a combinatorial $(d-k-1)$-sphere, and so is the link of $\tau$ in $\Gamma$. Furthermore, the link of $\tau$ in $B$ is contained in the link of $\tau$ in $\Gamma$. Thus, these two links must be equal. In particular, every facet $F$ of $\Gamma$ containing $\tau$ must be a facet of $B$. By the same argument, such an $F$ must also be a facet of $B'$. The result follows.
\end{proof}

A simplicial complex $\Delta$ is \textit{centrally symmetric} or {\em cs} if its vertex set is endowed with a {\em free involution} $\alpha: V(\Delta) \rightarrow V(\Delta)$ that induces a free involution on the set of all nonempty faces of $\Delta$. In more detail,  for all nonempty faces $\tau \in \Delta$, the following holds: $\alpha(\tau)\in\Delta$, $\alpha(\tau)\neq \tau$, and $\alpha(\alpha(\tau))=\tau$. To simplify notation, we write $\alpha(\tau)=-\tau$ and refer to $\tau$ and $-\tau$ as {\em antipodal faces} of $\Delta$. Similarly, if $\Gamma$ is a subcomplex of $\Delta$ we write $-\Gamma$ in place of $\alpha(\Gamma)$.

One example of a cs combinatorial $d$-sphere is the boundary complex of the $(d+1)$-dimensional \emph{cross-polytope}, $\partial\C^*_{d+1}$. The polytope $\C_{d+1}^*$ is the convex hull of $\{\pm e_1,\pm e_2,\ldots, \pm e_{d+1}\}$, where $e_1,e_2,\ldots, e_{d+1}$ are the endpoints of the standard basis in $\R^{d+1}$. As an abstract simplicial complex, $\partial\C^*_{d+1}$ is the $(d+1)$-fold suspension of $\{\emptyset\}$. Equivalently, it is the collection of all subsets of $V_{d+1}:=\{\pm v_1,\ldots,\pm v_{d+1}\}$ that contain at most one vertex from each pair $\{\pm v_i\}$. In particular, every cs simplicial complex with vertex set $V_n$ is a subcomplex of $\partial\C^*_n$.  

We close this section with a discussion of neighborliness and stackedness. Let $\Delta\subseteq \partial\C^*_{n}$ be a simplicial complex, possibly non-cs, and let $1\leq i\leq n$. We say that $\Delta$ is \emph{cs-$i$-neighborly} (w.r.t.~$V_n$), if $\skel_{i-1}(\Delta)=\skel_{i-1}(\partial \C_n^*)$. For $i=1$, this simply means that $V(\Delta)=V_n$. For convenience, we also refer to  simplices (i.e., faces of $\partial\C^*_{n}$) as cs-$0$-neighborly complexes.


A combinatorial $d$-ball $B$ is called \emph{$i$-stacked} (for some $0\leq i\leq d$), if all interior faces of $B$ are of dimension $\geq d-i$, that is, $\skel_{d-i-1}(B)=\skel_{d-i-1}(\partial B)$. In particular, $0$-stacked balls are simplices, and $1$-stacked balls are also known as stacked balls. The notion of stackedness takes its origins in the generalized lower bound theorem \cite{McMullenWalkup71,MuraiNevo2013,Stanley80}. 

The following lemmas will be handy.
\begin{lemma}\label{lm: stackedness via union and join}
	Let $B_1$ and $B_2$ be combinatorial balls of dimension $d_1$ and $d_2$, respectively. 
	If $B_1$ is $i_1$-stacked and $B_2$ is $i_2$-stacked, then
	\begin{enumerate}
		\item The complex $B_1*B_2$ is an $(i_1+i_2)$-stacked combinatorial $(d_1+d_2+1)$-ball. 
		\item Furthermore, if $d_1=d_2=d$, $i_1\leq i_2$, and  $B_1\cap B_2\subseteq \partial B_1\cap \partial B_2$ is a combinatorial $(d-1)$-ball that is $i_3$-stacked for some $i_3< i_2$, then $B_1\cup B_2$ is an $i_2$-stacked combinatorial $d$-ball.
	\end{enumerate}
\end{lemma}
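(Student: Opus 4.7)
The plan is to prove each part by invoking the relevant portion of Lemma~\ref{lm: interior faces of join} to first establish that the resulting complex is a combinatorial ball of the expected dimension, and then to track the dimensions of its interior faces to verify the required stackedness bound. Recall that a combinatorial $d$-ball $B$ is $i$-stacked precisely when every interior face $\sigma$ satisfies $\dim\sigma\geq d-i$, equivalently, $|\sigma|\geq d-i+1$.

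For Part 1, I would apply Lemma~\ref{lm: interior faces of join}(1) to the balls $B_1$ and $B_2$. This immediately gives that $B_1*B_2$ is a combinatorial $(d_1+d_2+1)$-ball whose interior faces are exactly the sets $F\cup F'$ where $F$ is an interior face of $B_1$ and $F'$ is an interior face of $B_2$. By $i_1$-stackedness and $i_2$-stackedness of $B_1$ and $B_2$, we have $|F|\geq d_1-i_1+1$ and $|F'|\geq d_2-i_2+1$, so since $F$ and $F'$ live on disjoint vertex sets,
\[
|F\cup F'|=|F|+|F'|\geq (d_1+d_2+1)-(i_1+i_2)+1,
\]
which is precisely the inequality defining $(i_1+i_2)$-stackedness for a $(d_1+d_2+1)$-ball.

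For Part 2, I would apply Lemma~\ref{lm: interior faces of join}(2) to conclude that $B_1\cup B_2$ is a combinatorial $d$-ball and that its interior faces fall into three classes: interior faces of $B_1$, interior faces of $B_2$, and interior faces of the $(d-1)$-dimensional intersection $B_1\cap B_2$. The first two classes have dimension at least $d-i_1\geq d-i_2$ and $d-i_2$, respectively, by the stackedness hypotheses on $B_1$ and $B_2$. The subtle point, and the only one that requires a moment of thought, is the third class: an interior face of the $(d-1)$-ball $B_1\cap B_2$ becomes an interior face of the $d$-ball $B_1\cup B_2$, so the correct bookkeeping is that its dimension is at least $(d-1)-i_3$, which under the assumption $i_3<i_2$ (equivalently $i_3\leq i_2-1$) is at least $d-i_2$. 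Hence every interior face of $B_1\cup B_2$ has dimension at least $d-i_2$, so $B_1\cup B_2$ is $i_2$-stacked.

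The main obstacle is not computational but rather notational: one must carefully distinguish between ``interior face of $B_1\cap B_2$'' viewed inside the $(d-1)$-ball $B_1\cap B_2$ versus viewed inside the $d$-ball $B_1\cup B_2$, and verify that the dimension shift between these two viewpoints is exactly absorbed by the strict inequality $i_3<i_2$. Once this bookkeeping is done, both parts reduce to an elementary counting of vertices in the interior faces.
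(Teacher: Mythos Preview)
Your proposal is correct and follows essentially the same approach as the paper's proof: both parts invoke the corresponding clauses of Lemma~\ref{lm: interior faces of join} to identify the interior faces, and then bound their dimensions using the stackedness hypotheses, with the key observation in Part~2 being that interior faces of the $(d-1)$-ball $B_1\cap B_2$ have dimension at least $(d-1)-i_3\geq d-i_2$.
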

\begin{proof}
	For part 1, observe that by definition of stackedness, all interior faces of $B_j$ have dimension  $\geq d_j-i_j$ for $j=1,2$. Hence by Lemma \ref{lm: interior faces of join}, the interior faces of $B_1*B_2$, which is a combinatorial $(d_1+d_2+1)$-ball, have dimension $\geq (d_1+d_2)-(i_1+i_2)+1$. Thus, $B_1*B_2$ is $(i_1+i_2)$-stacked. 
	
	Part 2 similarly follows from Lemma \ref{lm: interior faces of join} and the definition of stackedness. Indeed, the interior faces of both $B_1$ and $B_2$ have dimension $\geq d-i_2$. All other interior faces of $B_1\cup B_2$ are interior faces of $B_1\cap B_2$, and so have dimension $\geq (d-1)-i_3\geq d-i_2$. Hence $B_1\cup B_2$ is $i_2$-stacked.
\end{proof}

\begin{lemma}\label{lm: property of complement}
Let $k\geq 1$ be an integer. Let $\Delta\subseteq \partial \C^*_n$ be a combinatorial $(2k-1)$-sphere that is cs-$k$-neighborly w.r.t.~$V_n$, and let $B\subseteq \Delta $ be a combinatorial $(2k-1)$-ball that is both cs-$(k-1)$-neighborly w.r.t.~$V_n$ and $(k-1)$-stacked. Then $\Delta \backslash B$ is a combinatorial $(2k-1)$-ball that is cs-$k$-neighborly and $k$-stacked.
\end{lemma}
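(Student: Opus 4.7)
\emph{Proof plan.} Set $d = 2k-1$. That $\Delta \backslash B$ is a combinatorial $d$-ball is immediate from the Hudson-type fact quoted just before Lemma~\ref{lm: interior faces of join}. What remains is cs-$k$-neighborliness and $k$-stackedness; the plan is to first establish the structural identity $\partial(\Delta \backslash B) = \partial B$, and then read off the two conclusions as short dimension counts exploiting the equation $d-(k-1)=k$.

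For the boundary identity I would argue locally at each face. Since $\Delta \backslash B$ is generated by the facets of $\Delta$ not in $B$, every facet of $\Delta$ lies in exactly one of $B$ or $\Delta \backslash B$. Hence, for any $\sigma \in B$, the facet partition of $\lk(\sigma,\Delta)$ splits into the facets of $\lk(\sigma, B)$ and those of $\lk(\sigma,\Delta \backslash B)$. If $\sigma \in \partial B$, then $\lk(\sigma, B)$ is a proper combinatorial ball inside the combinatorial sphere $\lk(\sigma, \Delta)$, and Hudson's theorem applied on the link identifies its complement $\lk(\sigma,\Delta \backslash B)$ as a combinatorial ball. Thus $\sigma \in \partial(\Delta \backslash B)$, and the reverse inclusion follows by symmetry. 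This is the step I expect will require the most care: the main bookkeeping is to confirm that $\lk(\sigma,\Delta \backslash B)$ really is the Hudson complement of $\lk(\sigma,B)$ in $\lk(\sigma,\Delta)$, and that this sub-ball is indeed proper whenever $\sigma \in \partial B$.

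Once the boundary identity is in hand, the remaining two assertions are quick. For cs-$k$-neighborliness, take any $\sigma \in \skel_{k-1}(\partial \C^*_n)$; it lies in $\Delta$ by cs-$k$-neighborliness of $\Delta$, and because $B$ is $(k-1)$-stacked its interior faces have dimension at least $d-(k-1)=k > \dim \sigma$, so $\sigma$ is not interior to $B$ and must therefore belong to some facet of $\Delta$ not in $B$, putting $\sigma \in \Delta \backslash B$. For $k$-stackedness, take $\sigma \in \Delta \backslash B$ with $\dim \sigma \leq k-2$. Since $\sigma \in \partial \C^*_n$ has at most $k-1$ vertices and $B$ is cs-$(k-1)$-neighborly, $\sigma$ lies in $B$; since $\dim \sigma < k$, the $(k-1)$-stackedness of $B$ then forces $\sigma \in \partial B$, and the boundary identity gives $\sigma \in \partial(\Delta \backslash B)$. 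Hence every interior face of $\Delta \backslash B$ has dimension $\geq k-1 = d-k$, which is precisely $k$-stackedness.
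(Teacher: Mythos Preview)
Your proof is correct and follows essentially the same route as the paper's: both arguments reduce cs-$k$-neighborliness to the $(k-1)$-stackedness of $B$ and reduce $k$-stackedness to the cs-$(k-1)$-neighborliness of $B$, with the passage from $\partial B$ to $\partial(\Delta\backslash B)$ as the only PL input beyond Hudson's complement theorem. The sole difference is presentational: you isolate and prove the boundary identity $\partial(\Delta\backslash B)=\partial B$ as a separate lemma, whereas the paper invokes it implicitly in one line (``$F$ must lie on the boundary of $B$ and hence also on the boundary of $\Delta\backslash B$'').
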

\begin{proof}
Let $F\in\partial\C^*_n$ be any set of size $\leq k$.
	Since $\Delta$ is cs-$k$-neighborly, $F$ is a face of $\Delta$. Thus, if $F\notin B$, then $F\in \Delta\backslash B$. On the other hand, if $F\in B$, then since $\dim F\leq k-1<(2k-1)-(k-1)$ and since $B$ is $(k-1)$-stacked, $F$ must be a boundary face of $B$ and thus also a face of $\Delta\backslash B$. We infer that $\Delta\backslash B$ is cs-$k$-neighborly. 
	
	Next let $F$ be a face of $\Delta\backslash B$ of dimension $<(2k-1)-k=k-1$, i.e., $|F|\leq k-1$. Since $B$ is cs-$(k-1)$-neighborly, it follows that $F\in B$. Thus $F$ must lie on the boundary of $B$ and hence also on the boundary of $\Delta \backslash B$. We conclude that $\Delta\backslash B$ is $k$-stacked.
\end{proof}
\section{The construction}
Our goal is to construct $\Delta^{d}_n$ --- a cs combinatorial $d$-sphere with $2n\geq 2d+2$ vertices that is cs-$\lceil d/2\rceil$-neighborly. Our method is to build a certain ball on those $2n$ vertices that is both cs-$(\lceil d/2\rceil-1)$-neighborly and  $(\lceil d/2\rceil-1)$-stacked. The following lemma explains the significance of these balls and outlines the inductive procedure on $n$ we will use once such balls are constructed. For all values of $d+1\leq n$, the vertex set of $\Delta^{d}_n$ will be $V_n=V(\partial\C^*_n)$. In particular, our initial complex $\Delta^{d}_{d+1}$ is $\partial\C^*_{d+1}$.
\begin{lemma}\label{lm: induction method}
	Let $d\geq 1$ and $1\leq i\leq \lceil d/2\rceil$ be integers. Assume that $\Delta^{d,i}_n$ is a cs combinatorial $d$-sphere with $V(\Delta^{d,i}_n)=V_n$ that is cs-$i$-neighborly. Assume further that $B_n^{d,i-1}\subseteq\Delta_n^{d,i}$ is a combinatorial $d$-ball that satisfies the following properties: 
	\begin{itemize}
		\item the ball $B_{n}^{d,i-1}$ is both cs-$(i-1)$-neighborly w.r.t.~$V_n$ and $(i-1)$-stacked, and
		\item the balls $B^{d,i-1}_n$ and $-B^{d,i-1}_n$ share no common facets.
	\end{itemize} 
	Then the complex $\Delta_{n+1}^{d,i}$ obtained from $\Delta^{d,i}_n$ by replacing $B_n^{d,i-1}$ with $\partial B_n^{d,i-1}*v_{n+1}$ and $-B_n^{d,i-1}$ with $\partial(-B_n^{d,i-1})*(-v_{n+1})$ is a cs combinatorial $d$-sphere with $V(\Delta_{n+1}^{d,i})=V_{n+1}$ that is cs-$i$-neighborly.
\end{lemma}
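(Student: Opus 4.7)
The plan is to carry out the two replacements in sequence: first replace $B := B^{d,i-1}_n$ by $\partial B * v_{n+1}$ inside $\Delta := \Delta^{d,i}_n$, and then, in the resulting complex, replace $-B$ by $\partial(-B) * (-v_{n+1})$. The hypothesis that $B$ and $-B$ share no common facets guarantees that $-B$ remains a full-dimensional combinatorial $d$-subball of the intermediate complex, so the second replacement is well posed. Each stage produces a combinatorial $d$-sphere: the PL fact recalled in the excerpt says that $\Delta \setminus B$ is a combinatorial $d$-ball with boundary $\partial B$; Lemma~\ref{lm: interior faces of join}(1) says that $\partial B * v_{n+1}$ is a combinatorial $d$-ball with boundary $\partial B$; and gluing two combinatorial $d$-balls along their common boundary $(d-1)$-sphere yields a combinatorial $d$-sphere. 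Repeating the argument for $-B$ inside the intermediate sphere shows that $\Delta^{d,i}_{n+1}$ is a combinatorial $d$-sphere.

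Next, extend the antipodal involution by $v_{n+1} \leftrightarrow -v_{n+1}$. By construction, this map swaps $\partial B * v_{n+1}$ with $\partial(-B) * (-v_{n+1})$ and sends $\Delta \setminus (B \cup -B)$ to itself, so it acts as a simplicial involution on $\Delta^{d,i}_{n+1}$; freeness follows from freeness on $\Delta$ together with the fact that no face of $\Delta^{d,i}_{n+1}$ contains both $v_{n+1}$ and $-v_{n+1}$. For the vertex set, the inequality $i \leq \lceil d/2 \rceil$ forces $d \geq 2i - 1 > i - 1$, so $(i-1)$-stackedness of $B$ makes every vertex of $B$ a boundary vertex; therefore every vertex of $V_n$ survives via $\partial B$, $\partial(-B)$, or $\Delta \setminus (B \cup -B)$, and the two new vertices enter as cone apices, giving $V(\Delta^{d,i}_{n+1}) = V_{n+1}$.

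The core of the proof is cs-$i$-neighborliness. Let $F \subseteq V_{n+1}$ have size at most $i$ and contain no antipodal pair. If $F$ avoids the two new vertices, then $F \in \Delta$ by cs-$i$-neighborliness of $\Delta$; should $F$ lie in $B$, the inequality $\dim F \leq i-1 < d - (i-1)$ together with $(i-1)$-stackedness puts $F$ in $\partial B \subseteq \Delta^{d,i}_{n+1}$, and the case $F \in -B$ is symmetric. If $v_{n+1} \in F$, write $F = \{v_{n+1}\} \cup F'$ with $|F'| \leq i-1$; cs-$(i-1)$-neighborliness of $B$ gives $F' \in B$, and then $\dim F' \leq i - 2 < d - (i-1)$ combined with stackedness places $F'$ in $\partial B$, so $F \in \partial B * v_{n+1}$. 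The case $-v_{n+1} \in F$ is symmetric. The main obstacle is recognizing that the single hypothesis of $(i-1)$-stackedness is doing double duty --- keeping the low-dimensional faces of $B$ inside the new complex and allowing small $F'$ to attach to $v_{n+1}$ --- and it is precisely the inequality $d \geq 2i-1$ supplied by $i \leq \lceil d/2 \rceil$ that makes both uses work.
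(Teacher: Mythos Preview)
Your proof is correct and follows essentially the same approach as the paper's: the same three-case split on whether $F$ contains $v_{n+1}$, $-v_{n+1}$, or neither, and the same use of $(i-1)$-stackedness (via the inequality $i-1\le d-i$ coming from $i\le\lceil d/2\rceil$) to push low-dimensional faces of $\pm B$ onto $\partial(\pm B)$. Your treatment is in fact somewhat more explicit than the paper's --- you spell out the sequential replacement, the extension of the involution, and the survival of all old vertices --- but the underlying argument is the same.
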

\begin{proof}
	Observe that $B^{d, i-1}_n$ and $\partial B^{d,i-1}_n* v_{n+1}$ are two combinatorial balls with the same boundary. The fact that $B^{d,i-1}_n$ and $-B^{d,i-1}_n$ share no common facets combined with Lemma~\ref{lm: interior faces of join} then implies that $\Delta^{d,i}_{n+1}$ is a combinatorial sphere. Moreover, the definition of $\Delta^{d,i}_{n+1}$ along with the fact that $\Delta^{d,i}_{n}$ is a cs complex guarantees that the complex $\Delta^{d,i}_{n+1}$ is also cs. 
	
	To show that $\Delta^{d,i}_{n+1}$ is cs-$i$-neighborly, consider a set $F\in\partial \C^*_{n+1}$ with $|F|\leq i\leq \lceil d/2\rceil$. If $v_{n+1}$ is in $F$, then  $F\backslash v_{n+1}$ is a face of $\partial\C^*_{n}$ of size at most $i-1$. Since $B^{d,i-1}_n$ is cs-$(i-1)$-neighborly and $(i-1)$-stacked, it follows that
	$$F\backslash v_{n+1}\in \skel_{i-2}(B^{d,i-1}_n)
	\subseteq\skel_{d-i} (B^{d,i-1}_n)\subseteq\partial B^{d,i-1}_n. $$ Hence $F\in \partial B^{d,i-1}_n*v_{n+1}\subseteq \Delta^{d,i}_{n+1}$. If $-v_{n+1}\in F$, then by the above argument, $-F\in\Delta^{d,i}_{n+1}$, and so by symmetry, $F\in\Delta^{d,i}_{n+1}$. Finally, if $\pm v_{n+1} \notin F$, then since $\Delta^{d,i}_n$ is cs-$i$-neighborly, $F\in \Delta^{d,i}_n$. As any face of $\pm B^{d,i-1}_n$ of dimension $\leq i-1$ is on the boundary of $\pm B^{d,i-1}_n$, we conclude that  $F\in \Delta^{d,i}_n\backslash \pm B^{d,i-1}_n\subseteq \Delta^{d,i}_{n+1}$. 
\end{proof}

The idea is to build $B^{d,\lceil d/2\rceil-1}_n$ from balls that are less cs-neighborly using intertwined induction.
\begin{definition} \label{main-constr}
	Let $d\geq 1$, $i\leq \lceil d/2\rceil$, and $n\geq d+1$ be integers. Define $\Delta^{d}_n$ and $B^{d,i}_n$ inductively as follows: 
	\begin{itemize}
	\item For the initial cases, define
	$\Delta^{1}_{n}:=(v_1, v_2,\dots, v_n,-v_1,-v_2,\dots, -v_n,v_1)$, $\Delta^{d}_{d+1}:=\partial \C^*_{d+1}$, 
	$B^{d,j}_n:=\emptyset$   if $j<0$, and $B^{1, 0}_n:=\overline{(-v_1)v_n}$. (In particular, $B^{1,j}_n\subseteq \Delta^{1,1}_n$ for all $j\leq 0$.)

\item
Assume that $\Delta^{d-1}_{m}$ and $B^{d-1,i}_m \subseteq \Delta^{d-1}_{m}$ are already defined for all $i\leq \lfloor (d-1)/2\rfloor$ and $m\geq d$. If $d=2k$, then define $B^{2k-1,k}_{m}:=\Delta^{2k-1}_{m}\backslash B^{2k-1,k-1}_{m}$ (for $m\geq 2k$). Then, for all $n\geq d+1$ and $i\leq \lfloor d/2\rfloor$, define 
	 $$B^{d, i}_n:=
		\left(B^{d-1, i}_{n-1}*v_n\right) \cup\left( (-B^{d-1,i-1}_{n-1})*(-v_{n})\right).$$ 
		
	\item	 
	If $\Delta^{d}_n$ is already defined (and assuming that $\Delta^{d}_n\supseteq B^{d,\lceil d/2\rceil -1}_n$ and also that $B^{d,\lceil d/2\rceil -1}_n$ is a combinatorial ball that shares no common facets with $-B^{d,\lceil d/2\rceil -1}_n$), define $\Delta^{d}_{n+1}$ as the complex obtained from $\Delta^{d}_n$ by replacing $B^{d,\lceil d/2\rceil -1}_n$ with $\partial B^{d,\lceil d/2\rceil -1}_n * v_{n+1}$ and $-B^{d,\lceil d/2\rceil -1}_n$ with $\partial (-B^{d,\lceil d/2\rceil -1}_n) * (-v_{n+1})$.
	\end{itemize}
\end{definition}

To get the feel for this construction, we start by computing several explicit examples of complexes produced by Definition \ref{main-constr}. Note that the join of any simplicial complex
with the void complex $\emptyset$ is the void complex. Hence by induction, for all $d\geq 1$, $B^{d, 0}_n$ is the simplex on the vertex set $\{-v_1, v_{n-d+1}, v_{n-d+2}, \dots, v_n\}$. In particular, the link of $v_{i+1}$ in $\Delta^2_{i+1}$ is $\partial B^{2,0}_i$, the boundary of the triangle $\{-v_1,v_{i-1},v_i\}$; similarly, the link of $-v_{i+1}$ is the boundary of $\{v_1,-v_{i-1},-v_i\}$. It follows that $\Delta^2_n$ is obtained from $\partial\C^*_3$ by symmetric stacking --- an operation defined in \cite{KNNZ}. 

To get a handle on $B^{3,1}_n$, it is worth noting that for $i\leq \lceil d/2\rceil-1$, Definition \ref{main-constr} implies that
\begin{equation*}
	\begin{split}
	B^{d, i}_n &= \left( B^{d-2, i}_{n-2}*(v_{n-1}, v_n)  \cup  (-B^{d-2, i-1}_{n-2})* (-v_{n-1}, v_n)\right) \\
	&\quad \cup\left( (-B^{d-2,i-1}_{n-2})*(-v_{n-1}, -v_{n}) \cup  B^{d-2, i-2}_{n-2}*(v_{n-1}, -v_n)\right)\\
	&=\left( B^{d-2, i}_{n-2}*(v_{n-1}, v_n) \right)  \cup  \left( (-B^{d-2, i-1}_{n-2})* (v_n, -v_{n-1}, -v_n)\right)\cup\left( B^{d-2, i-2}_{n-2}*(v_{n-1}, -v_n)\right).
	\end{split}
\end{equation*}
If we let $d=3$ and $i=1$, then
$$B^{3,1}_n=\Big( (v_{n-2}, v_{n-3}, \dots, v_1, -v_{n-2}, -v_{n-3}, \dots, -v_1)* (v_{n-1}, v_n) \Big) \cup \Big( (v_1, -v_{n-2})*(v_n, -v_{n-1}, -v_n)\Big) .$$
In particular, $B^{3,1}_n$ is a combinatorial $3$-ball that has $2n-3$ facets; it is cs-$1$-neighborly w.r.t.~$V_n$ and $1$-stacked. This complex plays a key role in Jockusch's construction \cite{Jockusch95} of cs-$2$-neighborly $3$-spheres: in fact, Jockusch's $3$-spheres are exactly the complexes $\Delta^3_n$, for $n\geq 4$.

Several other easy consequences of Definition \ref{main-constr} are: if $i\leq \lfloor d/2 \rfloor$ and if $B^{d,i}_n$ is well-defined, then every facet of $B^{d,i}_n$ contains either $v_n$ or $-v_n$, and no face of $B^{d,i}_n$ contains two antipodal vertices. This latter property and the fact that  $\Delta^{d}_{d+1}=\partial \C_{d+1}^*$  is cs-$(d+1)$-neighborly implies that $\Delta^{d}_{d+1}$ does contain all balls $B^{d,i}_{d+1}$ for $i\leq \lceil d/2\rceil$, so we can at least start executing the algorithm in Definition \ref{main-constr} for all $d$ (once we have taken care of smaller dimensions).

Our remaining task is to show that this algorithm never gets stuck and that its output, $\Delta^{d}_n$, is a cs combinatorial $d$-sphere with $2n$ vertices that is cs-$\lceil d/2\rceil$-neighborly. To start, we verify in Lemma~\ref{lm: property of B^{2k-1,k-1}_n} that if Definition~\ref{main-constr} allowed us to reach a point where the complex $\Delta^{d}_n$ and its subcomplex $B^{d,\lceil d/2\rceil-1}_n$ were constructed, then all complexes produced by the definition up to that point satisfy all the assumptions of Lemma~\ref{lm: induction method}. This allows us to advance one more step and construct  $\Delta^{d}_{n+1}$. We then need to show that $\Delta^{d}_{n+1}$ produced in this way contains $B^{d,\lceil d/2\rceil-1}_{n+1}$. 
\begin{lemma}\label{lm: property of B^{2k-1,k-1}_n}
	If the algorithm reached the $(d,n)$-th step and produced a pair $\Delta^{d}_n \supseteq B^{d,\lceil d/2 \rceil-1}_n$, then $\Delta^{d}_n$ is a cs combinatorial $d$-sphere with vertex set $V_n$ that is cs-$\lceil d/2 \rceil$-neighborly while the complexes $B^{d,i}_n$ (for $0\leq i\leq \lceil d/2 \rceil$) are combinatorial $d$-balls that are cs-$i$-neighborly w.r.t.~$V_n$ and $i$-stacked. Furthermore, they satisfy the following ``nesting property'': $-B^{d,i-1}_n\subseteq B^{d,i}_n$ (for all $i\leq \lceil d/2 \rceil$). Finally, for all $i\leq \lfloor d/2 \rfloor$, $B^{d,i}_n$ and $-B^{d,i}_n$ share no common facets.
\end{lemma}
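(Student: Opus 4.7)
The plan is a simultaneous induction, outer on $d$ and inner on $n$, with all five invariants (ball property, cs-$i$-neighborliness, $i$-stackedness, the nesting $-B^{d,i-1}_n \subseteq B^{d,i}_n$, and no common facets for $i \leq \lfloor d/2\rfloor$) maintained together so they can feed each other. The base case $d = 1$ is immediate from the explicit definitions of $\Delta^1_n$ and $B^{1,0}_n$. The base case $n = d + 1$ uses $\Delta^d_{d+1} = \partial \C^*_{d+1}$, which is trivially a cs-$(d+1)$-neighborly combinatorial $d$-sphere, while the balls $B^{d,i}_{d+1}$ are handled by the same argument as in the inductive step.

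For the generic recursion range $i \leq \lfloor d/2\rfloor$, the definition reads $B^{d,i}_n = (B^{d-1,i}_{n-1} * v_n) \cup ((-B^{d-1,i-1}_{n-1}) * (-v_n))$. The outer nesting hypothesis $-B^{d-1,i-1}_{n-1} \subseteq B^{d-1,i}_{n-1}$ identifies the intersection of the two cones as $-B^{d-1,i-1}_{n-1}$, a $(d-1)$-ball sitting in the boundary of each cone, so Lemma~\ref{lm: interior faces of join} produces a combinatorial $d$-ball. cs-$i$-neighborliness is a short case split on whether an admissible face $F$ contains $v_n$, $-v_n$, or neither, each case reducing to cs-neighborliness of the component balls. $i$-stackedness follows from Lemma~\ref{lm: stackedness via union and join}(1) applied to each cone and then Lemma~\ref{lm: stackedness via union and join}(2) to their union. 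The nesting $-B^{d,i-1}_n \subseteq B^{d,i}_n$ expands the recursion on both sides and applies the outer nesting twice via the chain $B^{d-1,i-2}_{n-1} \subseteq -B^{d-1,i-1}_{n-1} \subseteq B^{d-1,i}_{n-1}$.

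The no-common-facets step is the most delicate. Every facet of $B^{d,i}_n$ contains exactly one of $\pm v_n$, so a common facet of $B^{d,i}_n$ and $-B^{d,i}_n$ through $v_n$ yields a common facet of $B^{d-1,i}_{n-1}$ and $B^{d-1,i-1}_{n-1}$. When $i \leq \lfloor (d-1)/2\rfloor$, outer nesting gives $B^{d-1,i-1}_{n-1} \subseteq -B^{d-1,i}_{n-1}$, lifting this to a common facet of $B^{d-1,i}_{n-1}$ and $-B^{d-1,i}_{n-1}$, forbidden by the outer induction hypothesis. The remaining case is $i = \lceil (d-1)/2\rceil$ with $d-1$ odd, and there $B^{d-1,i}_{n-1}$ is defined as $\Delta^{d-1}_{n-1} \setminus B^{d-1,i-1}_{n-1}$, so by construction it shares no facets with $B^{d-1,i-1}_{n-1}$.

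For the odd top case $d = 2k-1$, $i = k$, we use $B^{d,k}_n := \Delta^d_n \setminus B^{d,k-1}_n$. The inner induction on $n$ delivers $\Delta^d_n$ as a cs-$k$-neighborly combinatorial $d$-sphere (base $n = d+1$ is the cross-polytope; step $n \to n+1$ is Lemma~\ref{lm: induction method} with the just-verified $B^{d,k-1}_n$). Then Lemma~\ref{lm: property of complement} produces the ball, cs-$k$-neighborly, and $k$-stacked properties of $B^{d,k}_n$ in one shot, while the nesting $-B^{d,k-1}_n \subseteq B^{d,k}_n$ reduces to the no-common-facets statement for $i = k - 1$, already proven. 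The main obstacle throughout will be the bookkeeping between $\lfloor d/2 \rfloor$, $\lceil d/2 \rceil$, and $\lfloor (d-1)/2 \rfloor$ so that every invocation of the outer induction hypothesis lands in a case where its conclusion is in force.
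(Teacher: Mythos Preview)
Your proposal is correct and follows essentially the same approach as the paper's proof: a simultaneous induction on $d$ and $n$, identifying the intersection of the two cones via the nesting hypothesis at level $d-1$, invoking Lemmas~\ref{lm: interior faces of join} and~\ref{lm: stackedness via union and join} for the ball and stackedness claims, and handling the odd top case $d=2k-1$, $i=k$ via Lemma~\ref{lm: property of complement}. The only difference is cosmetic ordering (the paper establishes nesting and the no-common-facets property first, then the ball/stackedness/neighborliness claims, whereas you reverse this), and your no-common-facets argument is spelled out a touch more explicitly than the paper's one-line appeal to ``the nesting property and the definition of $B^{d-1,j}_n$''.
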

\begin{proof}
	We verify the properties of $\Delta^{d}_n$ and $B^{d,i}_n$ by induction on the dimension and the number of vertices. Since $\Delta^{d}_{d+1}=\partial\C^*_{d+1}$, the complex $\Delta^{d}_{d+1}$ is indeed a cs combinatorial $d$-sphere with $V(\Delta^{d}_{d+1})=V_{d+1}$ that is cs-$\lceil d/2 \rceil$-neighborly.
	Furthermore, for any $m\geq 2$,  $B^{1,0}_m$ and $B^{1,1}_m$ satisfy all the conditions: $B^{1,0}_m$ is an edge, and hence it is a cs-$0$-neighborly and $0$-stacked $1$-ball, while $B^{1,1}_m$ is a path on $V_m$, and hence a cs-$1$-neighborly (w.r.t.~$V_m$) and $1$-stacked $1$-ball. Finally, $-B^{1,0}_m=\overline{v_1(-v_m)}$ and $$B^{1,1}_m=\Delta^{1}_m\backslash B^{1,0}_m=(-v_1, -v_2, \dots, -v_m, v_1, v_2, \dots, v_m)\supseteq -B^{1,0}_m.$$
	
	For the inductive step, since the algorithm reached the $(d,n)$-th step, we can
	assume that the complexes $B^{d-1,j}_{n-1}$ (for $j\leq \lceil (d-1)/2 \rceil$) satisfy all the conditions of the lemma and that if $n>d+2$, then $\Delta^{d}_{n-1}$ is a cs combinatorial $d$-sphere (with vertex set $V_{n-1}$) that is cs-$\lceil d/2 \rceil$-neighborly. We now show that then the same holds for $B^{d,i}_n$ (for all $0\leq i\leq \lceil d/2 \rceil$) and $\Delta^{d}_{n}$. 
	
	We start with the nesting property. By definition, for all $i\leq \lfloor d/2\rfloor=\lceil (d-1)/2 \rceil$,
\begin{align*}
	-B^{d,i-1}_{n}&=\left( (-B^{d-1, i-1}_{n-1})*(-v_n)\right) \cup \left( B^{d-1, i-2}_{n-1}*v_n\right),\qquad \mbox{and}\\
	B^{d,i}_{n}&=\left( (-B^{d-1, i-1}_{n-1})*(-v_n)\right) \cup \left( B^{d-1, i}_{n-1}*v_n\right).
\end{align*}
	By the inductive hypothesis, $B^{d-1,i-2}_{n-1}$ is a subcomplex of $B^{d-1, i}_{n-1}$. Hence $-B^{d,i-1}_n\subseteq B^{d, i}_n$ for $i\leq \lfloor d/2 \rfloor$. We will treat the case of $d=2k-1, \ i=k$ a bit later. 
	
	Next, it follows from the nesting property and the definition of $B^{d-1,j}_n$ that $B^{d-1, i}_{n-1}$ and $B^{d-1, i-1}_{n-1}$ share no common facets for all $1\leq i\leq \lfloor d/2 \rfloor \leq \lceil (d-1)/2 \rceil$, and hence neither do $B^{d,i}_n$ and $-B^{d,i}_n$. This result also implies the nesting property for $d=2k-1$ and $i=k$: since $B^{2k-1, k-1}_n$ and $-B^{2k-1, k -1}_n$ have no common facets,  $-B^{2k-1,k-1}_n\subseteq \Delta^{2k-1}_n\backslash B^{2k-1, k-1}_n=B^{2k-1,k}_n$.
	
	We now show that $B^{d,i}_n$ is an $i$-stacked combinatorial $d$-ball. For the case of $i\leq \lfloor d/2\rfloor$, recall that $B^{d,i}_n$ is the union of $D_1=B^{d-1, i}_{n-1}*v_{n}$ and $D_2=(-B^{d-1, {i-1}}_{n-1})*(-v_{n})$. By the inductive hypothesis and Lemma \ref{lm: interior faces of join}, $D_1$ and $D_2$ are combinatorial $d$-balls. The nesting propery implies that $D_1\cap D_2=-B^{d-1,i-1}_{n-1}$ and by the inductive hypothesis it is also a combinatorial $(d-1)$-ball. Since it is contained in $\partial D_1 \cap \partial D_2$, Lemma \ref{lm: interior faces of join} guarantees  that $B^{d,i}_n=D_1\cup D_2$ is a combinatorial $d$-ball. Furthermore, by the inductive hypothesis and Lemma \ref{lm: stackedness via union and join}, $D_1$ is $i$-stacked, $D_2$ is $(i-1)$-stacked and the intersection is $(i-1)$-stacked. Thus the union $B^{d,i}_n$ is $i$-stacked. 
	
    Next we turn to treating cs-neighborliness: we show that for all $i\leq \lfloor d/2\rfloor$, $B^{d,i}_n$ is cs-$i$-neighborly w.r.t.~$V_n$ while $\Delta^{d}_n$ is a cs combinatorial $d$-sphere that is cs-$\lceil d/2\rceil$-neighborly w.r.t~$V_n$. The statement about $B^{d,i}_n$ follows easily from the definition of $B^{d,i}_n$ and the inductive hypothesis asserting that $B^{d-1,j}_{n-1}$ is cs-$j$-neighborly w.r.t.~$V_{n-1}$ for $j=i-1,i$. Now, if $n=d+1$, then $\Delta^{d}_n=\partial\C^*_{d+1}$, so it is a cs combinatorial $d$-sphere on $V_n$ that is cs-$\lceil d/2\rceil$-neighborly, and if $n>d+1$, then the inductive hypothesis on $\Delta^{d}_{n-1}$ along with Lemma~\ref{lm: induction method} and the established properties of $B^{d,\lceil d/2\rceil-1}_{n-1}$ imply that $\Delta^{d}_n$ is a cs combinatorial $d$-sphere on $V_n$ that is cs-$\lceil d/2\rceil$-neighborly. 
		
		Finally, we discuss the case of $d=2k-1, \ i=k$. Since $B^{2k-1,k}_n=\Delta^{2k-1}_n\backslash B^{2k-1, k-1}_n$, we obtain from the previous paragraph and Lemma \ref{lm: property of complement} that $B^{2k-1,k}_n$ is a combinatorial $(2k-1)$-ball that is cs-$k$-neighborly and $k$-stacked. This completes the proof.
\end{proof}

It now remains to show that $B^{d,\lceil d/2\rceil -1}_{n+1}$ is a subcomplex of $\Delta^{d}_{n+1}$.  (\emph{Recall that our assumptions include that $B^{d-1,\lceil (d-1)/2\rceil -1}_m$ is a subcomplex of $\Delta^{d-1}_m$ for all $m\geq d$ and that $B^{d, \lceil d/2\rceil -1}_{n}$ is a subcomplex of $\Delta^{d}_{n}$.}) To facilitate the proof of this result, we rely on the following lemma; its proof is an immediate consequence of the definition of $B^{d,i}_n$, Lemma \ref{lm: interior faces of join}, and the nesting property.

\begin{lemma}\label{lm: partial B}
	Under the assumptions of Lemma \ref{lm: property of B^{2k-1,k-1}_n}, for all $d\geq 2$ and $0\leq i\leq \lfloor d/2 \rfloor$, 
	$$\partial B^{d, i}_n=\left( \partial B^{d-1, i}_{n-1}*v_n\right)   \cup \left( \partial (-B^{d-1, i-1}_{n-1})*(-v_{n})\right) \cup \left( B^{d-1, i}_{n-1}\backslash (-B^{d-1, i-1}_{n-1})\right) .$$
\end{lemma}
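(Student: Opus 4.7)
The plan is to decompose $B^{d,i}_n = D_1 \cup D_2$ with $D_1 := B^{d-1,i}_{n-1}*v_n$ and $D_2 := (-B^{d-1,i-1}_{n-1})*(-v_n)$, identify the interior of $B^{d,i}_n$ by applying Lemma~\ref{lm: interior faces of join} to this decomposition, and then read off the boundary by complementation. The three terms of the claimed formula should correspond, respectively, to boundary faces containing $v_n$, boundary faces containing $-v_n$, and boundary faces containing neither.

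First I would record that the nesting property from Lemma~\ref{lm: property of B^{2k-1,k-1}_n} gives $-B^{d-1,i-1}_{n-1}\subseteq B^{d-1,i}_{n-1}$, so since the apices $v_n$ and $-v_n$ are distinct,
$$D_1\cap D_2 \;=\; B^{d-1,i}_{n-1}\cap(-B^{d-1,i-1}_{n-1}) \;=\; -B^{d-1,i-1}_{n-1}.$$
Lemma~\ref{lm: interior faces of join}(1) identifies the interior faces of each cone: those of $D_1$ are exactly $\tau\cup\{v_n\}$ with $\tau$ interior to $B^{d-1,i}_{n-1}$, and analogously for $D_2$. Lemma~\ref{lm: interior faces of join}(2) then says that the interior faces of $B^{d,i}_n$ are the union of the interior faces of $D_1$, of $D_2$, and of $D_1\cap D_2=-B^{d-1,i-1}_{n-1}$.

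Next I would classify a face $\sigma\in B^{d,i}_n$ by whether it contains $v_n$, $-v_n$, or neither. If $v_n\in\sigma$, then $\sigma$ lies only in $D_1$, and $\sigma$ is a boundary face of $B^{d,i}_n$ iff $\sigma\setminus\{v_n\}$ is a boundary face of $B^{d-1,i}_{n-1}$; this contributes $\partial B^{d-1,i}_{n-1}*v_n$. Symmetrically, faces with $-v_n\in\sigma$ contribute $\partial(-B^{d-1,i-1}_{n-1})*(-v_n)$. If $\pm v_n\notin\sigma$, then $\sigma\in B^{d-1,i}_{n-1}$ (using nesting to absorb $-B^{d-1,i-1}_{n-1}$), and since such a $\sigma$ is not interior to either cone, $\sigma$ is interior to $B^{d,i}_n$ iff it is interior to $-B^{d-1,i-1}_{n-1}$.

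The one delicate step, and the main obstacle, is to match the set of boundary faces not containing $\pm v_n$ with the third term $B^{d-1,i}_{n-1}\backslash(-B^{d-1,i-1}_{n-1})$, which by the paper's convention is the subcomplex of $B^{d-1,i}_{n-1}$ generated by facets not lying in $-B^{d-1,i-1}_{n-1}$. If $\sigma\notin -B^{d-1,i-1}_{n-1}$, then every facet of $B^{d-1,i}_{n-1}$ through $\sigma$ fails to lie in $-B^{d-1,i-1}_{n-1}$, so $\sigma$ is present in the third term. If $\sigma$ is interior to $-B^{d-1,i-1}_{n-1}$, a link-comparison argument (a $k$-sphere cannot be a proper subcomplex of a connected $k$-ball) forces the link of $\sigma$ in $B^{d-1,i}_{n-1}$ to equal the link of $\sigma$ in $-B^{d-1,i-1}_{n-1}$, so every facet of $B^{d-1,i}_{n-1}$ through $\sigma$ lies in $-B^{d-1,i-1}_{n-1}$ and $\sigma$ is excluded on both sides. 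The remaining case is $\sigma\in\partial(-B^{d-1,i-1}_{n-1})$: if such a $\sigma$ happens to be excluded from the third term (because all facets of $B^{d-1,i}_{n-1}$ through it lie in $-B^{d-1,i-1}_{n-1}$), it is harmlessly absorbed by the second term, since $\partial(-B^{d-1,i-1}_{n-1})\subseteq \partial(-B^{d-1,i-1}_{n-1})*(-v_n)$. Assembling the three pieces then yields the claimed equality.
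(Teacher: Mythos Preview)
Your proof is correct and follows precisely the route the paper indicates: the paper's own ``proof'' is the single sentence that the result is an immediate consequence of the definition of $B^{d,i}_n$, Lemma~\ref{lm: interior faces of join}, and the nesting property, and you have carefully unpacked exactly those three ingredients. Your care with the third term---splitting into $\sigma\notin -B^{d-1,i-1}_{n-1}$, $\sigma$ interior to $-B^{d-1,i-1}_{n-1}$, and $\sigma\in\partial(-B^{d-1,i-1}_{n-1})$, and noting that the last case is absorbed by the second cone---is the right way to reconcile the ``facet-generated'' complex $B^{d-1,i}_{n-1}\backslash(-B^{d-1,i-1}_{n-1})$ with the description via interior faces coming from Lemma~\ref{lm: interior faces of join}(2); this is the detail the paper suppresses. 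Your link-comparison step is also fine (alternatively, one can invoke Lemma~\ref{lm: interior faces of join}(3) inside the ambient sphere $\Delta^{d-1}_{n-1}$ to reach the same conclusion).
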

A combinatorial sphere $\Delta$ is called \emph{$k$-stacked} if it is the boundary complex of a combinatorial $k$-stacked ball.
\begin{corollary}\label{cor: Delta is k-stacked}
	Under the assumptions of Lemma \ref{lm: property of B^{2k-1,k-1}_n}, for all $k\geq 1$ and $n\geq 2k+1$, $\partial B^{2k, k}_n=\Delta^{2k-1}_n$. In particular, $\Delta^{2k-1}_n$ is $k$-stacked.
\end{corollary}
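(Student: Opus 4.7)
The plan is to deduce $\partial B^{2k,k}_n = \Delta^{2k-1}_n$ directly from Lemma~\ref{lm: partial B} and Definition~\ref{main-constr}, without any further induction: both complexes will be shown to admit the same three-piece decomposition. First I would apply Lemma~\ref{lm: partial B} with $d=2k$ and $i=k$ to write
$$\partial B^{2k,k}_n = \bigl(\partial B^{2k-1,k}_{n-1}*v_n\bigr)\cup\bigl(\partial(-B^{2k-1,k-1}_{n-1})*(-v_n)\bigr)\cup\bigl(B^{2k-1,k}_{n-1}\backslash(-B^{2k-1,k-1}_{n-1})\bigr).$$
The goal is to match this term-by-term with the surgery description of $\Delta^{2k-1}_n$ from Definition~\ref{main-constr}: the complex obtained from $\Delta^{2k-1}_{n-1}$ by replacing $B^{2k-1,k-1}_{n-1}$ with $\partial B^{2k-1,k-1}_{n-1}*v_n$ and $-B^{2k-1,k-1}_{n-1}$ with $\partial(-B^{2k-1,k-1}_{n-1})*(-v_n)$.

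For the first summand, I would use that $B^{2k-1,k}_{n-1}=\Delta^{2k-1}_{n-1}\backslash B^{2k-1,k-1}_{n-1}$ by Definition~\ref{main-constr}; since $\Delta^{2k-1}_{n-1}$ is a combinatorial $(2k-1)$-sphere decomposed into these two $(2k-1)$-balls, which share no common facets by Lemma~\ref{lm: property of B^{2k-1,k-1}_n}, the two balls have the same boundary, so $\partial B^{2k-1,k}_{n-1}=\partial B^{2k-1,k-1}_{n-1}$. The first summand thus becomes $\partial B^{2k-1,k-1}_{n-1}*v_n$, and together with the second summand (already in the desired form) these give precisely the two cones introduced by the surgery.

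For the third summand, I would use the nesting property $-B^{2k-1,k-1}_{n-1}\subseteq B^{2k-1,k}_{n-1}$ together with the no-common-facets property of $B^{2k-1,k-1}_{n-1}$ and $-B^{2k-1,k-1}_{n-1}$ (both from Lemma~\ref{lm: property of B^{2k-1,k-1}_n}) to rewrite
$$B^{2k-1,k}_{n-1}\backslash(-B^{2k-1,k-1}_{n-1}) = \Delta^{2k-1}_{n-1}\backslash\bigl(B^{2k-1,k-1}_{n-1}\cup(-B^{2k-1,k-1}_{n-1})\bigr),$$
which is exactly the portion of $\Delta^{2k-1}_{n-1}$ left untouched by the surgery. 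Combining the three matched pieces yields $\partial B^{2k,k}_n=\Delta^{2k-1}_n$. The ``in particular'' conclusion is then immediate: Lemma~\ref{lm: property of B^{2k-1,k-1}_n} asserts that $B^{2k,k}_n$ is a $k$-stacked combinatorial $2k$-ball, and $\Delta^{2k-1}_n$ is its boundary, so by definition $\Delta^{2k-1}_n$ is a $k$-stacked combinatorial sphere.

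The main obstacle I anticipate is not conceptual but notational: correctly identifying $B^{2k-1,k}_{n-1}\backslash(-B^{2k-1,k-1}_{n-1})$ as a subcomplex of $\Delta^{2k-1}_{n-1}$ requires juggling three closely related subcomplexes and simultaneously invoking both the nesting property and the no-common-facets property. Once that piece of bookkeeping is settled, the corollary is essentially a symbolic rearrangement of the decomposition supplied by Lemma~\ref{lm: partial B}.
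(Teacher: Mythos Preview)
Your proposal is correct and follows essentially the same approach as the paper: apply Lemma~\ref{lm: partial B} with $d=2k$, $i=k$, use $\partial B^{2k-1,k}_{n-1}=\partial B^{2k-1,k-1}_{n-1}$, and match the resulting three-piece decomposition with the surgery description of $\Delta^{2k-1}_n$. The paper's proof is terser---it leaves the identification of the third piece $B^{2k-1,k}_{n-1}\backslash(-B^{2k-1,k-1}_{n-1})$ with the untouched part of $\Delta^{2k-1}_{n-1}$ implicit---but you have simply made that bookkeeping explicit. One minor quibble: the fact that $B^{2k-1,k-1}_{n-1}$ and $B^{2k-1,k}_{n-1}$ share no common facets is immediate from the definition $B^{2k-1,k}_{n-1}=\Delta^{2k-1}_{n-1}\backslash B^{2k-1,k-1}_{n-1}$ rather than from Lemma~\ref{lm: property of B^{2k-1,k-1}_n}.
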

\begin{proof}
	By definition, the complex $\Delta^{2k-1}_n$ is obtained from $\Delta^{2k-1}_{n-1}=B^{2k-1, k-1}_{n-1}\cup B^{2k-1,k}_{n-1}$ by replacing $\pm B^{2k-1, k-1}_{n-1}$ with $\pm (\partial B^{2k-1,k-1}_{n-1}* v_n)$. On the other hand, Lemma \ref{lm: partial B} and the fact that $\partial B^{2k-1, k}_n=\partial B^{2k-1, k-1}_n$ yield that $$\partial B^{2k, k}_n=\left( \partial B^{2k-1, k-1}_{n-1}*v_n\right)   \cup \left( \partial (-B^{2k-1, k-1}_{n-1})*(-v_{n})\right) \cup \left( B^{2k-1, k}_{n-1}\backslash (-B^{2k-1, k-1}_{n-1})\right) .$$
	Hence $\partial B^{2k, k}_n =\Delta^{2k-1}_n$. Since by Lemma \ref{lm: property of B^{2k-1,k-1}_n}, the ball $B^{2k, k}_n$ is $k$-stacked , the sphere $\Delta^{2k-1}_n$ is also $k$-stacked.
\end{proof}
	
We are now in a position to show that the ball $B^{d, \lceil d/2\rceil -1}_{n}$ is indeed a subcomplex of $\Delta^d_n$. This will require one additional lemma.

\begin{lemma}\label{lm: inclusion}
	Under the assumptions of Lemma \ref{lm: property of B^{2k-1,k-1}_n}, for all $d\geq 2$, $n\geq d+1$, and $0\leq i\leq j \leq \lfloor d/2 \rfloor$, the following inclusion holds: $B^{d-1, i}_n\subseteq \partial B^{d, j}_n$.
\end{lemma}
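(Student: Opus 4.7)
\emph{Approach.} The plan is to induct on $d\ge 2$. For the inductive step at $d\ge 3$, the key structural observation is that both sides of the desired inclusion decompose according to the last vertex pair $\{\pm v_n\}$. By Definition~\ref{main-constr},
$$B^{d-1,i}_n = (B^{d-2,i}_{n-1}\ast v_n) \cup ((-B^{d-2,i-1}_{n-1})\ast(-v_n)),$$
and by Lemma~\ref{lm: partial B},
$$\partial B^{d,j}_n \supseteq (\partial B^{d-1,j}_{n-1}\ast v_n) \cup (\partial(-B^{d-1,j-1}_{n-1})\ast(-v_n)).$$
Matching cones with apex $v_n$ (respectively $-v_n$) reduces the desired inclusion to the two lower-dimensional statements $B^{d-2,i}_{n-1}\subseteq \partial B^{d-1,j}_{n-1}$ and $B^{d-2,i-1}_{n-1}\subseteq \partial B^{d-1,j-1}_{n-1}$ (the second translated to its antipode by central symmetry). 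These are instances of the statement at dimension $d-1$, and a parity check shows $j-1\le \lfloor(d-1)/2\rfloor$ in both parities (the $i=0$ case being vacuous since $B^{d-2,-1}_{n-1}=\emptyset$), so the second inclusion is immediate from the inductive hypothesis; likewise the first follows whenever $j\le \lfloor(d-1)/2\rfloor$.

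\emph{The main obstacle.} The only case escaping this reduction is $d=2k$ with $j=k$, since then $j>\lfloor(d-1)/2\rfloor=k-1$. I would split on $i$. If $i\le k-1$, then $B^{d-1,i}_n$ is still produced by the union formula above, and the key additional fact is that $B^{2k-1,k-1}_{n-1}$ and $B^{2k-1,k}_{n-1}$ are complementary combinatorial balls inside the sphere $\Delta^{2k-1}_{n-1}$, hence share a common boundary: $\partial B^{2k-1,k}_{n-1}=\partial B^{2k-1,k-1}_{n-1}$. The first of the two required inclusions therefore becomes $B^{2k-2,i}_{n-1}\subseteq \partial B^{2k-1,k-1}_{n-1}$, which lies inside the inductive range (using $j'=k-1=\lfloor(d-1)/2\rfloor$). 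If instead $i=k$, the ball $B^{d-1,k}_n=B^{2k-1,k}_n$ is defined by the complement formula rather than the union formula, so the reduction above does not apply at all; I would bypass it entirely by invoking Corollary~\ref{cor: Delta is k-stacked}, which identifies $\partial B^{2k,k}_n$ with $\Delta^{2k-1}_n$, and $B^{2k-1,k}_n\subseteq \Delta^{2k-1}_n$ holds by its very definition as $\Delta^{2k-1}_n\setminus B^{2k-1,k-1}_n$.

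\emph{Base case.} For $d=2$ I would check the three pairs $(i,j)\in\{(0,0),(0,1),(1,1)\}$ by direct computation: $B^{1,0}_n=\overline{(-v_1)v_n}$ is an edge, $B^{1,1}_n$ is the path $(-v_1,\dots,-v_n,v_1,\dots,v_n)$, $\partial B^{2,0}_n$ is the boundary of the triangle $\overline{(-v_1)v_{n-1}v_n}$, and Lemma~\ref{lm: partial B} unfolds $\partial B^{2,1}_n$ into the $2n$-cycle $(v_n,v_{n-1},\dots,v_1,-v_n,\dots,-v_1,v_n)$; each of the three inclusions is then immediate by inspection.
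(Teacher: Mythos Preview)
Your proof is correct and follows essentially the same approach as the paper: induct on $d$, decompose $B^{d-1,i}_n$ via Definition~\ref{main-constr}, decompose $\partial B^{d,j}_n$ via Lemma~\ref{lm: partial B}, match cones, and treat the extremal case $d=2j$ via Corollary~\ref{cor: Delta is k-stacked}. The differences are organizational. The paper takes three families of base cases ($j=0$ for all $d$; $i=0$ for all $d,j$, via a telescoping use of Lemma~\ref{lm: partial B} down to $\partial B^{2j,j}$; and $d=2j$ for all $i$, via $\partial B^{2j,j}_n=\Delta^{2j-1}_n$ together with nesting), leaving the inductive step only for $1\le j<d/2$. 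You instead take a single base case $d=2$ by direct inspection and absorb the $i=0$ case into the induction (since $B^{d-2,-1}_{n-1}=\emptyset$), while for $d=2k$, $j=k$ you split on $i$: for $i=k$ you invoke Corollary~\ref{cor: Delta is k-stacked} exactly as the paper does, and for $i<k$ you use the complementary-ball identity $\partial B^{2k-1,k}_{n-1}=\partial B^{2k-1,k-1}_{n-1}$ to bring the first sub-inclusion back into the inductive range. Both routes are sound; yours is a bit more streamlined in the base case, the paper's handling of $d=2j$ is slightly more uniform across $i$.
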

\begin{proof}
	The proof is by induction on $d$. We start by checking the statement in the following base cases.
	\begin{enumerate}
		\item If $j=0$ (and hence $i=0$), then for any $d\geq 2$, $$\partial B^{d, 0}_n=\partial \overline{\{-v_1,  v_{n-d+1}, \dots, v_n\}}\supseteq \overline{\{-v_1, v_{n-d+2}, \dots, v_n\}}=B^{d-1, 0}_n.$$
		
		\item If $i=0$, then by Lemmas \ref{lm: partial B} and \ref{cor: Delta is k-stacked}, for any $j>0$ and $d\geq 2j$,
		$$\partial B^{d, j}_n \supseteq \partial B^{d-1, j}_{n-1}*v_n \supseteq \dots \supseteq \partial B^{2j, j}_{n-d+2j}* \overline{\{v_{n-d+2j+1}, \dots, v_n\}}= \Delta^{2j-1}_{n-d+2j}*\overline{\{v_{n-d+2j+1}, \dots, v_n\}}.$$
		Since $B^{d-1, 0}_n=B^{2j-1,0}_{n-d+2j}*\overline{\{v_{n-d+2j+1}, \dots, v_n\}}$ and $\Delta^{2j-1}_{n-d+2j}\supseteq B^{2j-1, 0}_{n-d+2j}$, we conclude that $B^{d-1, 0}_n \subseteq \partial B^{d,j}_n$.
		\item If $j\geq 1$ and $d=2j$, then Lemma \ref{cor: Delta is k-stacked} implies that for any $i\leq j$,  $\partial B^{2j,j}_n=\Delta^{2j-1}_n\supseteq B^{2j-1, i}_n$.
	\end{enumerate}

For the inductive step we can thus assume that $1\leq j<d/2$ (equivalently, that $1\leq j\leq \lfloor (d-1)/2\rfloor$), and that the statement holds for $d-1$ and all $0\leq i\leq j\leq \lfloor (d-1)/2\rfloor$. Then
\begin{equation*}
	\begin{split}
	B^{d-1, i}_n &= \left( B^{d-2, i}_{n-1} * v_n\right)  \cup \left( (-B^{d-2, i-1}_{n-1})*(-v_n)\right) \\
	&\stackrel{(*)}{\subseteq} \left( \partial B^{d-1, j}_{n-1} *v_n\right)  \cup \left( (-\partial B^{d-1, j-1}_{n-1})*(-v_n)\right) \\
	&\stackrel{(**)}{\subseteq} \partial B^{d, j}_n.	
	\end{split}
\end{equation*}
Here $(*)$ follows from the inductive hypothesis and $(**)$ follows from Lemma \ref{lm: partial B}. This completes the proof.
\end{proof}

An immediate corollary of Lemma \ref{lm: inclusion} is that $B^{d,\lceil d/2\rceil-1}_{n+1} \subseteq \Delta^d_{n+1}$:
\begin{corollary}\label{lm: B is subcomplex}
	For all $d\geq 2$ and $n\geq d+1$, $$B^{d, \lceil d/2\rceil-1}_{n+1}\subseteq \left( \partial B^{d, \lceil d/2 \rceil -1}_n *v_{n+1}\right)  \cup \left( \partial (-B^{d, \lceil d/2 \rceil -1}_n) *(-v_{n+1})\right) \subseteq \Delta^d_{n+1}.$$
\end{corollary}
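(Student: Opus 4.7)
Let $i := \lceil d/2 \rceil - 1$. The plan is to bootstrap the desired containment directly from the recursive definition of $B^{d,i}_{n+1}$ and the key combinatorial inclusion already established in Lemma~\ref{lm: inclusion}. The second inclusion in the corollary is essentially free: it is immediate from the very definition of $\Delta^d_{n+1}$, which is obtained from $\Delta^d_n$ by replacing the pair $\pm B^{d, \lceil d/2\rceil-1}_n$ with the cones $\pm(\partial B^{d,\lceil d/2\rceil-1}_n * (\pm v_{n+1}))$. So the content is in the first inclusion.

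First I would verify that the index $i = \lceil d/2\rceil -1$ is in the range allowed by Lemma~\ref{lm: inclusion}. If $d=2k$, then $\lceil d/2\rceil - 1 = k-1 \leq k = \lfloor d/2\rfloor$; if $d = 2k-1$, then $\lceil d/2\rceil -1 = k-1 = \lfloor d/2\rfloor$. So in both cases $i \leq \lfloor d/2\rfloor$, meaning the inductive definition
$$B^{d,i}_{n+1} = \bigl(B^{d-1,i}_n * v_{n+1}\bigr) \cup \bigl((-B^{d-1,i-1}_n)*(-v_{n+1})\bigr)$$
applies (with the usual convention $B^{d-1,-1}_n = \emptyset$ handling the boundary case $d=2$).

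The main step is then two applications of Lemma~\ref{lm: inclusion} with $j = i = \lceil d/2\rceil -1$. The first application, with the same index $i$ in the first slot, yields $B^{d-1,i}_n \subseteq \partial B^{d,i}_n$, hence $B^{d-1,i}_n * v_{n+1} \subseteq \partial B^{d,i}_n * v_{n+1}$. The second application, now with the first index equal to $i-1 \leq i$, yields $B^{d-1,i-1}_n \subseteq \partial B^{d,i}_n$; applying the antipodal involution (which commutes with both $B$ and $\partial$) gives $-B^{d-1,i-1}_n \subseteq \partial(-B^{d,i}_n)$, so that $(-B^{d-1,i-1}_n) * (-v_{n+1}) \subseteq \partial(-B^{d,i}_n) * (-v_{n+1})$. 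Taking the union of the two inclusions finishes the first containment.

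I do not foresee a real obstacle: once Lemma~\ref{lm: inclusion} is in hand, the corollary is a purely formal consequence, with the only thing to check being that the index bookkeeping $\lceil d/2\rceil-1 \leq \lfloor d/2 \rfloor$ stays within the hypotheses of that lemma in both parities of $d$. The only mild care needed is in the degenerate case $d = 2$, $i = 0$, where the term involving $B^{d-1,i-1}_n$ vanishes and the claim reduces to the single inclusion $B^{1,0}_n \subseteq \partial B^{2,0}_n$, itself the base case of Lemma~\ref{lm: inclusion}.
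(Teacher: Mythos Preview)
Your proof is correct and follows exactly the same approach as the paper: the second inclusion is immediate from the definition of $\Delta^d_{n+1}$, and the first inclusion is obtained by unpacking the recursive definition of $B^{d,\lceil d/2\rceil-1}_{n+1}$ and applying Lemma~\ref{lm: inclusion} to each piece. Your write-up simply spells out in detail (including the index check $\lceil d/2\rceil -1 \leq \lfloor d/2\rfloor$ and the degenerate case $d=2$) what the paper compresses into a single sentence.
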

\begin{proof}
	The first inclusion is by definition of $B^{d, \lceil d/2 \rceil-1}_n$ and Lemma \ref{lm: inclusion}, and the second inclusion is by definition of $\Delta^d_{n+1}$.
\end{proof}

Corollary \ref{lm: B is subcomplex} together with Lemma \ref{lm: property of B^{2k-1,k-1}_n} completes the proof that the construction described in Definition \ref{main-constr} never gets stuck and that for every $d\geq 2$ and $n\geq d+1$ it outputs a cs combinatorial $d$-sphere with vertex set $V_n$ that is cs-$\lceil d/2 \rceil$-neighborly. We summarize this in the theorem below:
\begin{theorem}\label{thm}
	For all $d\geq 2$ and $n\geq d+1$, the complex $\Delta^d_n$ is a cs combinatorial $d$-sphere with vertex set $V_n$ that is cs-$\lceil d/2\rceil$-neighborly.
\end{theorem}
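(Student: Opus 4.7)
The plan is to prove the theorem by double induction: on the dimension $d$ and, for each fixed $d$, on the number of vertices $n$. The strategy is to show that at every step $(d,n)$ the algorithm of Definition \ref{main-constr} has successfully produced the required data, and then chain together Lemmas \ref{lm: induction method}, \ref{lm: property of B^{2k-1,k-1}_n}, and Corollary \ref{lm: B is subcomplex}.

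The base cases are immediate. For $d=1$, $\Delta^1_n$ is by definition the $2n$-cycle, hence a cs combinatorial $1$-sphere that is cs-$1$-neighborly. For each $d\geq 2$ and $n=d+1$, we have $\Delta^d_{d+1}=\partial\C^*_{d+1}$, which is cs and cs-$(d+1)$-neighborly, so a fortiori cs-$\lceil d/2\rceil$-neighborly; moreover, since $\partial\C^*_{d+1}$ contains every subset of $V_{d+1}$ with no antipodal pair, every ball $B^{d,i}_{d+1}$ is automatically a subcomplex of $\Delta^d_{d+1}$, so the algorithm can be initiated.

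For the inductive step, fix $d\geq 2$ and $n\geq d+1$. I would take as outer hypothesis that for all $d'<d$ and all $m\geq d'+1$ the conclusions of both the theorem and Lemma \ref{lm: property of B^{2k-1,k-1}_n} hold (so in particular the algorithm has produced $\Delta^{d'}_m$ together with all its auxiliary balls, with the stated properties), and as inner hypothesis that the algorithm has reached step $(d,n)$ and produced $\Delta^d_n\supseteq B^{d,\lceil d/2\rceil-1}_n$. Under these assumptions, Lemma \ref{lm: property of B^{2k-1,k-1}_n} certifies that $\Delta^d_n$ is a cs-$\lceil d/2\rceil$-neighborly combinatorial $d$-sphere on $V_n$ and that $B^{d,\lceil d/2\rceil-1}_n$ is a cs-$(\lceil d/2\rceil-1)$-neighborly, $(\lceil d/2\rceil-1)$-stacked combinatorial $d$-ball that shares no facet with its antipode (using $\lceil d/2\rceil-1\leq \lfloor d/2\rfloor$). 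Plugging this into Lemma \ref{lm: induction method} with $i=\lceil d/2\rceil$ produces $\Delta^d_{n+1}$ as a cs combinatorial $d$-sphere on $V_{n+1}$ that is cs-$\lceil d/2\rceil$-neighborly. Corollary \ref{lm: B is subcomplex} then guarantees $B^{d,\lceil d/2\rceil-1}_{n+1}\subseteq \Delta^d_{n+1}$, which reinstates the inner hypothesis at step $(d,n+1)$ and closes the induction on $n$; the outer induction on $d$ is then closed similarly.

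The only potential obstacle — already disposed of by the preceding lemmas — is the circularity built into the recursive construction: defining $\Delta^d_{n+1}$ requires that the ball $B^{d,\lceil d/2\rceil-1}_n$ exist with the correct combinatorial properties, while continuing the recursion requires $B^{d,\lceil d/2\rceil-1}_{n+1}$ to lie inside the just-produced sphere $\Delta^d_{n+1}$. Lemma \ref{lm: property of B^{2k-1,k-1}_n} settles the first point, and the chain of Lemma \ref{lm: partial B}, Corollary \ref{cor: Delta is k-stacked}, Lemma \ref{lm: inclusion}, and Corollary \ref{lm: B is subcomplex} settles the second. With both pieces in place, the theorem is simply the organized statement that these modules fit together across the double induction.
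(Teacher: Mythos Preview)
Your proposal is correct and follows essentially the same approach as the paper: the theorem is presented there as the summary of what Lemma~\ref{lm: property of B^{2k-1,k-1}_n} and Corollary~\ref{lm: B is subcomplex} jointly establish, and you have spelled out the double induction (on $d$ and $n$) and the roles of Lemmas~\ref{lm: induction method}, \ref{lm: property of B^{2k-1,k-1}_n}, and Corollary~\ref{lm: B is subcomplex} in exactly the way the paper intends. Your explicit identification of the base cases and of the potential circularity (and how Corollary~\ref{lm: B is subcomplex} breaks it) is more detailed than the paper's one-line summary, but the content is the same.
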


\begin{remark} \label{rm:neighborly-i-spheres-construction}
Let $1\leq i\leq \lceil d/2 \rceil$.
 Since by nesting property, 
$$B^{d,i-1}_n\subseteq B^{d, \lceil d/2 \rceil-1}_n \cup \left(-B^{d, \lceil d/2 \rceil-1}_n\right),$$ it follows from	Corollary \ref{lm: B is subcomplex} that $B^{d,i-1}_n\subseteq \Delta^d_n$. This allows us to construct cs-$i$-neighborly spheres that are not cs-$(i+1)$-neighborly for {\em all} $1\leq i\leq \lceil d/2 \rceil$. Indeed, 
Lemmas \ref{lm: induction method} and \ref{lm: property of B^{2k-1,k-1}_n} along with Theorem \ref{thm} imply that the complex  $\Delta^{d,i}_{n+1}$ obtained from $\Delta^d_n$ by replacing the subcomplexes $\pm B^{d, i-1}_n$ with $\pm \big(\partial B^{d, i-1}_n* v_{n+1}\big)$ is a cs combinatorial $d$-sphere that is cs-$i$-neighborly. To see that $\Delta^{d,i}_{n+1}$ is not cs-$(i+1)$-neighborly, note that the inductive proof of Lemma \ref{lm: property of B^{2k-1,k-1}_n} in fact shows that while $B^{d, i-1}_n$ is cs-$(i-1)$-neighborly, it is not cs-$i$-neighborly. In particular, $\partial B^{d, i-1}_n$ is not cs-$i$-neighborly, and hence $\Delta^d_{n+1}$ is not cs-$(i+1)$-neighborly.
\end{remark}

\section{Properties} 
It is our hope that the spheres $\Delta^d_n$ constructed in this paper will find many other applications. With this in mind, in this section we discuss several additional properties that these spheres possess. One important property is that certain face links in $\Delta^d_n$ are also highly cs-neighborly.

\begin{proposition}\label{lm: link of v_{n-1}v_n} For all $k\geq 2$ and $n\geq 2k-1$,
	$$\lk(v_nv_{n+1},\Delta^{2k-1}_{n+1})=\Delta^{2k-3}_{n-1}=\lk(v_{n}v_{n+1}v_{n+2}, \Delta^{2k}_{n+2}).$$
\end{proposition}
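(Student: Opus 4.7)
The plan is to strip off the top vertices one at a time, using Definition \ref{main-constr} and the recursive formula for $B^{d,i}_n$, and to observe that both sides reduce to the same intermediate complex $\lk(v_n, \partial B^{2k-1,k-1}_n)$. The base case $n=2k-1$ will be handled by direct inspection, since then $\Delta^{2k-1}_{n+1}=\partial \C^*_{2k}$ and $\Delta^{2k}_{n+2}=\partial \C^*_{2k+1}$ are boundary complexes of cross-polytopes and both three-fold (resp. two-fold) links collapse to $\partial \C^*_{2k-2}=\Delta^{2k-3}_{2k-2}$.

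For $n\geq 2k$, Definition \ref{main-constr} says that $\Delta^{2k-1}_{n+1}$ is built from $\Delta^{2k-1}_n$ by replacing $B^{2k-1,k-1}_n$ with $\partial B^{2k-1,k-1}_n * v_{n+1}$; hence $\lk(v_{n+1},\Delta^{2k-1}_{n+1})=\partial B^{2k-1,k-1}_n$, so
\[
\lk(v_nv_{n+1},\Delta^{2k-1}_{n+1})=\lk(v_n,\partial B^{2k-1,k-1}_n).
\]
The recursive formula
\[
B^{2k-1,k-1}_n = \bigl(B^{2k-2,k-1}_{n-1}*v_n\bigr)\cup\bigl((-B^{2k-2,k-2}_{n-1})*(-v_n)\bigr)
\]
shows that $v_n$ appears only in the first summand, so $\lk(v_n,B^{2k-1,k-1}_n)=B^{2k-2,k-1}_{n-1}$. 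By Lemma \ref{lm: property of B^{2k-1,k-1}_n} this link is a combinatorial ball, so $v_n$ is a boundary vertex of $B^{2k-1,k-1}_n$. I will then invoke the identity $\lk(v,\partial B)=\partial\lk(v,B)$ valid whenever $v$ is a boundary vertex of a combinatorial ball $B$ (which follows from $\lk(\sigma\cup v,B)=\lk(\sigma,\lk(v,B))$ and the characterisation of boundary faces as those whose link is a ball) to conclude $\lk(v_n,\partial B^{2k-1,k-1}_n)=\partial B^{2k-2,k-1}_{n-1}$. Finally, Corollary \ref{cor: Delta is k-stacked} with $k$ replaced by $k-1$ identifies $\partial B^{2k-2,k-1}_{n-1}$ with $\Delta^{2k-3}_{n-1}$, yielding the first equality.

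For the second equality the same peeling is performed twice on $\Delta^{2k}_{n+2}$: first $\lk(v_{n+2},\Delta^{2k}_{n+2})=\partial B^{2k,k-1}_{n+1}$, and then the analogous recursive formula for $B^{2k,k-1}_{n+1}$ together with the same boundary-vertex identity gives $\lk(v_{n+1},\partial B^{2k,k-1}_{n+1})=\partial B^{2k-1,k-1}_n$. Thus $\lk(v_nv_{n+1}v_{n+2},\Delta^{2k}_{n+2})=\lk(v_n,\partial B^{2k-1,k-1}_n)$, which has just been shown to equal $\Delta^{2k-3}_{n-1}$. The only non-trivial ingredient in the whole argument is the boundary-vertex identity $\lk(v,\partial B)=\partial\lk(v,B)$; once that is in hand, the rest is bookkeeping with the recursive structure of the $B^{d,i}_n$, and I do not anticipate any serious obstacle.
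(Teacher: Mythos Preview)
Your proposal is correct and follows essentially the same approach as the paper. The only cosmetic difference is that the paper invokes Lemma~\ref{lm: partial B} (the explicit decomposition of $\partial B^{d,i}_n$) to read off $\lk(v_n,\partial B^{2k-1,k-1}_n)=\partial B^{2k-2,k-1}_{n-1}$ directly, whereas you first compute $\lk(v_n,B^{2k-1,k-1}_n)=B^{2k-2,k-1}_{n-1}$ from the recursion and then apply the general identity $\lk(v,\partial B)=\partial\lk(v,B)$; these amount to the same computation.
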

\begin{proof} The assertion holds if $n=2k-1$ since in this case all complexes appearing in the statement are the boundary complexes of cross-polytopes. Thus assume $n\geq 2k$.
	By definition of $\Delta^{2k-1}_{n+1}$, Lemma \ref{lm: partial B}, and Lemma \ref{cor: Delta is k-stacked}, $$\lk(v_nv_{n+1}, \Delta^{2k-1}_{n+1})=\lk(v_n, \partial B^{2k-1, k-1}_{n})=\partial B^{2k-2, k-1}_{n-1}=\Delta^{2k-3}_{n-1}.$$
	Similarly, $$\lk(v_{n}v_{n+1}v_{n+2}, \Delta^{2k}_{n+2})=\lk(v_{n}v_{n+1}, \partial B^{2k, k-1}_{n+1})=\lk(v_{n}, \partial B^{2k-1, k-1}_n)=\partial B^{2k-2, k-1}_n=\Delta^{2k-3}_{n-1}.$$
This completes the proof.\end{proof}

In view of Proposition \ref{lm: link of v_{n-1}v_n}, it is natural to ask whether the link of $v_{n+1}$ in $\Delta^{2k}_{n+1}$ is $\Delta^{2k-1}_n$, and, more generally, whether $\Delta^{2k}_{n+1}$ is a suspension. This is especially relevant to our discussion since suspensions of cs-$k$-neighborly spheres are also cs-$k$-neighborly; in particular, both complexes $\Sigma \Delta^{2k-1}_n$ and $\Delta^{2k}_{n+1}$ are cs combinatorial $2k$-spheres that are cs-$k$-neighborly w.r.t~$V_{n+1}$. Are they isomorphic? The following proposition shows that the answer is no.


\begin{proposition}\label{cor: no suspension}
	If $n\geq 2k+1$, then the complex $\Delta^{2k}_{n+1}$ is not a suspension. In particular, $\Delta^{2k}_{n+1}$ is not isomorphic to $\Sigma \Delta^{2k-1}_n$.
\end{proposition}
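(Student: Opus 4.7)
The plan is to argue by contradiction: suppose $\Delta^{2k}_{n+1}=\Sigma\Gamma$, with non-adjacent suspension poles $u$ and $w$.

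For $k=1$, the $2$-sphere $\Delta^2_{n+1}$ ($n\ge 3$) cannot be a suspension by a direct degree count. Each symmetric-stacking step in Definition~\ref{main-constr} increments by $1$ the degrees of $\pm v_1$ and of two other vertices of the replaced triangles; a careful bookkeeping shows that the maximum vertex degree in $\Delta^2_{n+1}$ is $n+2$, attained at $\pm v_1$. A suspension pole of a $2$-sphere on $2(n+1)$ vertices must have degree $|V|-2=2n$, and $n+2<2n$ for $n\ge 3$, so no vertex qualifies.

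For $k\ge 2$, Theorem~\ref{thm} gives cs-$2$-neighborliness of $\Delta^{2k}_{n+1}$, so the only non-edges are antipodal pairs and the suspension poles must be $\{v_i,-v_i\}$ for some $i$. The cs involution of $\Delta^{2k}_{n+1}$ interchanges $\lk(v_i)$ and $\lk(-v_i)$, so the suspension identity $\lk(v_i)=\lk(-v_i)$ forces $\Gamma=\lk(v_i,\Delta^{2k}_{n+1})$ to be itself cs; and a standard case analysis on whether a face of $\Sigma\Gamma$ contains a pole shows that $\Sigma\Gamma$ cs-$k$-neighborly is equivalent to $\Gamma$ cs-$k$-neighborly, so $\Gamma$ must in addition be cs-$k$-neighborly. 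The decisive step is to show that for no $i$ is $\lk(v_i,\Delta^{2k}_{n+1})$ cs-$k$-neighborly. For $i=n+1$ the construction gives $\lk(v_{n+1},\Delta^{2k}_{n+1})=\partial B^{2k,k-1}_n$; since $B^{2k,k-1}_n$ is $(k-1)$-stacked, its faces of dimension $\le k$ all lie on its boundary, so $\partial B^{2k,k-1}_n$ is cs-$k$-neighborly iff $B^{2k,k-1}_n$ is. But Lemma~\ref{lm: property of B^{2k-1,k-1}_n} only provides cs-$(k-1)$-neighborliness. Unwinding
$$B^{2k,k-1}_n=(B^{2k-1,k-1}_{n-1}*v_n)\cup((-B^{2k-1,k-2}_{n-1})*(-v_n))$$
produces an explicit $k$-element non-antipodal subset of $V_n$ missing from $B^{2k,k-1}_n$ (for $k=2$, the edge $\{-v_1,-v_n\}$, which lies in neither side because $-v_1\notin V(-B^{3,0}_{n-1})=\{v_1,-v_{n-3},-v_{n-2},-v_{n-1}\}$), yielding the contradiction for $i=n+1$.

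For the remaining indices $i\le n$, the plan is a joint induction on dimension. Proposition~\ref{lm: link of v_{n-1}v_n} gives $\lk(v_{n-1}v_nv_{n+1},\Delta^{2k}_{n+1})=\Delta^{2k-3}_{n-2}$; if the poles were $\pm v_i$ with $i\le n-2$, then the $2$-face $v_{n-1}v_nv_{n+1}$ would miss both poles, and the link of a face of a suspension that avoids the poles is itself a suspension, forcing $\Delta^{2k-3}_{n-2}$ to be a suspension. The base case $k=2$ gives $\Delta^1_{n-2}$, a $2(n-2)$-cycle with $n-2\ge 3$, which is not a suspension (exactly as in the $k=1$ analysis above); for larger $k$ one inducts using an odd-dimensional companion statement that $\Delta^{2k-3}_m$ is likewise not a suspension for $m\ge 2k-2$, proved in parallel by the same strategy applied to the odd-dimensional construction. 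The residual cases $i\in\{n-1,n\}$ are handled by choosing an alternative $2$-face of $\Delta^{2k}_{n+1}$ disjoint from $\pm v_i$ and computing its link via a minor variant of Proposition~\ref{lm: link of v_{n-1}v_n}. The main obstacle is the bookkeeping in these last two steps — supplying the odd-dimensional companion statement and the extra link identities — which requires additional work but reduces, with care, to direct inspection of Definition~\ref{main-constr}.
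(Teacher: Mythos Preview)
Your overall architecture matches the paper's: treat $k=1$ separately, then for $k\ge 2$ argue the poles must be an antipodal pair $\pm v_i$ and do a case analysis on $i$, using Proposition~\ref{lm: link of v_{n-1}v_n} to handle the ``generic'' indices. But two of your deferred steps are genuine gaps, and in each case the paper's argument is both different and shorter.

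For $i\le n-2$ you reduce to showing $\Delta^{2k-3}_{n-2}$ is not a suspension, and then propose an induction on $k$ together with an unproved odd-dimensional companion statement. This detour is unnecessary. The paper dispatches this in one line (its Observation~2): $\Delta^{2k-3}_{n-2}$ is cs-$(k-1)$-neighborly, while any suspension $\Sigma\Gamma$ with $\Gamma$ a $(2k-4)$-sphere other than $\partial\C^*_{2k-2}$ is at most cs-$(k-2)$-neighborly; since $n-2>2k-2$ the vertex count rules out the cross-polytope, done. No induction, no companion statement.

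For $i\in\{n-1,n\}$ you say you will ``choose an alternative $2$-face \ldots\ and compute its link via a minor variant of Proposition~\ref{lm: link of v_{n-1}v_n}}.'' This is where your plan is thinnest, and I do not see how to make it work: Proposition~\ref{lm: link of v_{n-1}v_n} is quite specific to the face $v_{n-1}v_nv_{n+1}$, and there is no evident analogue for a $2$-face avoiding $\pm v_n$ or $\pm v_{n-1}$. The paper does \emph{not} look for such a face. Instead it exploits directly that if $\pm v_i$ were poles then $\lk(v_i)=\lk(-v_i)$ (or the corresponding equality one level down). For $i=n$ it computes $\lk(\pm v_n,\partial B^{2k,k-1}_n)$ from Lemma~\ref{lm: partial B} and observes these are $\partial B^{2k-1,k-1}_{n-1}$ and $\partial(-B^{2k-1,k-2}_{n-1})$, which differ because $-B^{2k-1,k-2}_{n-1}\subsetneq B^{2k-1,k-1}_{n-1}$. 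For $i=n-1$ it compares $\lk(\pm v_{n-1},\lk(v_nv_{n+1},\Delta^{2k}_{n+1}))$ and finds one side cs and the other not. These are short direct computations from Definition~\ref{main-constr} and Lemma~\ref{lm: partial B}; your ``alternative $2$-face'' route would need new link identities you have not supplied.

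For $i=n+1$, your idea of showing $\partial B^{2k,k-1}_n$ fails cs-$k$-neighborliness is valid, but you only exhibit the missing face for $k=2$. The paper's route (its Observation~3) is again more direct: it shows $\partial B^{2k,k-1}_n$ is not even cs, by checking that $\lk(v_n,\partial B^{2k,k-1}_n)=\partial B^{2k-1,k-1}_{n-1}$ and $\lk(-v_n,\partial B^{2k,k-1}_n)=\partial(-B^{2k-1,k-2}_{n-1})$ cannot coincide (else their union would be a $(2k-1)$-sphere strictly inside $\Delta^{2k-1}_{n-1}$).
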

\begin{proof}
We will rely on the following three observations.
\begin{enumerate}
	\item If $\Delta=\Sigma\Gamma$ and $F\in \Gamma$, then $\lk(F, \Delta)=\Sigma\lk(F, \Gamma)$.
	\item For $k\geq 2$ and $n\geq 2k+1$, $\Delta^{2k-3}_{n-2}$ is not a suspension. 
	
	To see this, recall that $\Delta^{2k-3}_{n-2}$ is cs-$(k-1)$-neighborly. Now, a cs $(2k-4)$-sphere that is not the boundary of the cross-polytope can be at most cs-$(k-2)$-neighborly. Hence, if $\Delta^{2k-3}_{n-2}$ were a suspension, it would also be at most cs-$(k-2)$-neighborly. 
	
	\item For $\ell \geq 1$ and $m\geq 2\ell +1$, the complex $\partial B^{2\ell, \ell-1}_m$ is not a cs complex.
	
	If $\partial B^{2\ell, \ell-1}_m$ were a cs complex, then the link $\lk(v_m, \partial B^{2\ell, \ell-1}_m)=\partial B^{2\ell-1, \ell-1}_{m-1}$ would coincide with $-\lk(-v_m, \partial B^{2\ell, \ell-1}_m)=-\partial (-B^{2\ell-1, \ell-2}_{m-1})=\partial B^{2\ell-1, \ell-2}_{m-1}$. Since the balls $B^{2\ell-1, \ell-1}_{m-1}$ and $B^{2\ell-1, \ell-2}_{m-1}$ share no common facets, their union would then be a $(2\ell-1)$-sphere strictly contained in the $(2\ell-1)$-sphere $\Delta^{2\ell-1}_m$, which is impossible.
\end{enumerate}

We are now ready to prove the proposition. If $k=1$, then $\Delta^2_{n+1}$ is obtained from $\partial C^*_3$ by symmetric stacking, and so $\Delta^2_{n+1}$ is not a suspension for $n\geq 3$. Thus let $k\geq 2$, and assume by contradiction that $\Delta^{2k}_{n+1}$ is a suspension with suspension vertices $\pm v_i$ for some $i\leq n+1$. 

If $i<n-1$, then by Observation 1, the link of $v_{n-1}v_nv_{n+1}$ in $\Delta^{2k}_{n+1}$ must be a suspension. However, by Proposition \ref{lm: link of v_{n-1}v_n}, $\lk(v_{n-1}v_nv_{n+1}, \Delta^{2k}_{n+1})=\Delta^{2k-3}_{n-2}$, which according to Observation 2 is not a suspension.

If $i=n+1$, then $\lk(v_{n+1}, \Delta^{2k}_{n+1})$ must equal $\lk(-v_{n+1}, \Delta^{2k}_{n+1})$. In particular, $\lk(v_{n+1}, \Delta^{2k}_{n+1})=\partial B^{2k, k-1}_n$ must be a cs complex, which contradicts Observation 3. 

If $i=n$, then by Observation 1, $\lk(v_{n+1}, \Delta^{2k}_{n+1})=\partial B^{2k, k-1}_n$ must be a suspension with suspension vertices $\pm v_n$. However, since $-B^{2k-1, k-2}_{n-1}$ is strictly contained in $B^{2k-1, k-1}_{n-1}$, the complexes $\lk(v_n, \partial B^{2k,k-1}_n)=\partial B^{2k-1, k-1}_{n-1}$ and $\lk(-v_n, \partial B^{2k, k-1}_n)=\partial (-B^{2k-1, k-2}_{n-1})$ are not equal.
	
	Finally, if $i=n-1$, then by Observation 1, we must have
	$$\lk(v_{n-1}, \lk(v_nv_{n+1}, \Delta^{2k}_{n+1}))=\lk(-v_{n-1}, \lk(v_nv_{n+1}, \Delta^{2k}_{n+1})).$$
	The proof of Proposition \ref{lm: link of v_{n-1}v_n} implies that the former complex is $\Delta^{2k-3}_{n-2}$ while the latter complex is $\partial (-B^{2k-2, k-2}_{n-2})$. These complexes cannot be equal since one is cs and the other one is non-cs.
\end{proof}

\begin{remark} \label{rm:special-ball}
The ball $B^{2k,k}_{n+1}$ is ``special" in the following sense: while $B^{d,i}_{n+1}\subseteq \Delta^d_{n+1}$ for all $d\geq 1$, $i\leq \lceil d/2\rceil-1$, and $n\geq d$ (see Remark \ref{rm:neighborly-i-spheres-construction}), the ball $B^{2k,k}_{n+1}$ 
is {\bf not} a subcomplex of $\Delta^{2k}_{n+1}$ for $n>2k$. Indeed, a straightforward computation using that $\Delta^{2k-1}_n=B^{2k-1,k}_n\cup B^{2k-1,k-1}_n$ along with the definition of $B^{2k,k}_{n+1}$ implies that $B^{2k,k}_{n+1}\cup (-B^{2k,k}_{n+1})=\Sigma \Delta^{2k-1}_{n}$. Our claim that $B^{2k,k}_{n+1}\not\subseteq \Delta^{2k}_{n+1}$ (for $n\geq 2k+1$) then follows from Proposition \ref{cor: no suspension}. However, in the case of $n=2k$, the same computation shows that $B^{2k, k}_{2k+1} \cup (-B^{2k, k}_{2k+1})=\Sigma \Delta^{2k-1}_{2k}=\Sigma(\partial\C^*_{2k})=\partial\C^*_{2k+1}=\Delta^{2k}_{2k+1}$.
\end{remark}

To close this section, we show that while $\Delta^{2k}_{n+1}$ is not the suspension of $\Delta^{2k-1}_n$, the complexes $\Delta^{2k}_{n}$ and $\Delta^{2k-1}_n$ are closely related:
\begin{proposition} \label{prop: inclusion}
	Let $k\geq 2$ and $n\geq 2k+1$. Then $\Delta^{2k}_n\supseteq \Delta^{2k-1}_n$.
\end{proposition}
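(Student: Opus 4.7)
The plan is to argue by induction on $n \geq 2k+1$.  The base case $n = 2k+1$ is immediate: $\Delta^{2k}_{2k+1} = \partial \C^*_{2k+1}$ contains every antipode-free subset of $V_{2k+1}$, and since $\Delta^{2k-1}_{2k+1}$ is cs on $V_{2k+1}$, none of its faces contains an antipodal pair.  For the inductive step, assume $\Delta^{2k-1}_{n-1} \subseteq \Delta^{2k}_{n-1}$ and take a face $F \in \Delta^{2k-1}_n$.

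If $v_n \in F$ (the case $-v_n \in F$ is handled by the cs structure), then by the construction in Definition~\ref{main-constr}, $F \setminus \{v_n\} \in \lk(v_n, \Delta^{2k-1}_n) = \partial B^{2k-1,k-1}_{n-1}$.  Lemma~\ref{lm: inclusion} applied with $d = 2k$ and $i = j = k-1$ gives $\partial B^{2k-1,k-1}_{n-1} \subseteq B^{2k-1,k-1}_{n-1} \subseteq \partial B^{2k,k-1}_{n-1} = \lk(v_n, \Delta^{2k}_n)$, whence $F \in \Delta^{2k}_n$.  If instead $\pm v_n \notin F$, then $F \in \Delta^{2k-1}_{n-1}$, so by the inductive hypothesis $F \in \Delta^{2k}_{n-1}$.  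Since $\Delta^{2k}_n$ is obtained from $\Delta^{2k}_{n-1}$ by replacing $\pm B^{2k,k-1}_{n-1}$ with the respective cones $\pm \partial B^{2k,k-1}_{n-1} * (\pm v_n)$, the task reduces to showing that $F$ is \emph{not} an interior face of $B^{2k,k-1}_{n-1}$ or of $-B^{2k,k-1}_{n-1}$.  I will suppose for contradiction that $F$ is interior to $B^{2k,k-1}_{n-1}$, the other case being symmetric.  Because $\Delta^{2k-1}_{n-1} = \partial B^{2k,k}_{n-1}$ by Corollary~\ref{cor: Delta is k-stacked}, we also have $F \in B^{2k,k}_{n-1}$.

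The heart of the argument is to locate a common $2k$-sphere $\Gamma$ containing both $B^{2k,k-1}_{n-1}$ and $B^{2k,k}_{n-1}$ as full-dimensional subcomplexes: for $n-1 \geq 2k+2$, take $\Gamma := \partial B^{2k+1,k}_{n-1}$ (both inclusions follow from Lemma~\ref{lm: inclusion} applied with $d = 2k+1$, $j = k$, and $i \in \{k-1, k\}$); for $n-1 = 2k+1$ take $\Gamma := \partial \C^*_{2k+1}$, which contains every cs-free $2k$-dimensional subcomplex on $V_{2k+1}$.  The proof of Lemma~\ref{lm: interior faces of join}(3) then forces every facet of $\Gamma$ containing $F$ to be a facet of $B^{2k,k-1}_{n-1}$; any facet of $B^{2k,k}_{n-1}$ containing $F$ (automatically a facet of $\Gamma$ by a dimension count) would thus be a common facet of $B^{2k,k-1}_{n-1}$ and $B^{2k,k}_{n-1}$.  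The main obstacle is therefore to verify that $B^{2k,k-1}_{n-1}$ and $B^{2k,k}_{n-1}$ share no common facets at all.  I plan to do this by unwinding Definition~\ref{main-constr}: every facet of $B^{2k,j}_{n-1}$ is of the form $G * v_{n-1}$ with $G$ a facet of $B^{2k-1,j}_{n-2}$, or of the form $G * (-v_{n-1})$ with $G$ a facet of $-B^{2k-1,j-1}_{n-2}$.  A common facet at levels $j = k$ and $j = k-1$ would therefore produce either a common facet of the complementary balls $B^{2k-1,k}_{n-2}$ and $B^{2k-1,k-1}_{n-2}$ in $\Delta^{2k-1}_{n-2}$ (impossible by Definition~\ref{main-constr}), or a common facet of the consecutive balls $B^{2k-1,k-1}_{n-2}$ and $B^{2k-1,k-2}_{n-2}$, ruled out by the ``consecutive balls share no common facets'' property recorded inside the proof of Lemma~\ref{lm: property of B^{2k-1,k-1}_n}.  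This contradiction closes the induction.
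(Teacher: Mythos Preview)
Your argument is correct, but it follows a genuinely different route from the paper's proof.

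The paper proves the inclusion \emph{constructively}: it inductively builds a combinatorial $2k$-ball $D_n\subseteq\Delta^{2k}_n$ with $D_n\cup(-D_n)=\Delta^{2k}_n$, $D_n\supseteq -B^{2k,k-1}_n$, and $\partial D_n=\Delta^{2k-1}_n$; the last property yields $\Delta^{2k-1}_n\subseteq\Delta^{2k}_n$ directly. Your proof, by contrast, is a direct face-by-face verification: you use Lemma~\ref{lm: inclusion} to handle faces through $\pm v_n$, and for the remaining faces you reduce the question to showing that $B^{2k,k-1}_{n-1}$ and $B^{2k,k}_{n-1}$ share no common facet, which you deduce from Definition~\ref{main-constr} together with the ``consecutive balls share no facets'' step inside the proof of Lemma~\ref{lm: property of B^{2k-1,k-1}_n}. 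The ambient sphere $\Gamma$ you use (either $\partial B^{2k+1,k}_{n-1}$ or $\partial\C^*_{2k+1}$) and the appeal to part~3 of Lemma~\ref{lm: interior faces of join} make this work cleanly.

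What each buys: your argument is more elementary---it introduces no new objects and relies only on lemmas already in hand. The paper's argument yields strictly more information: the balls $D_n$ realize $\Delta^{2k-1}_n$ as an ``equator'' of $\Delta^{2k}_n$, and (as the paper notes immediately after the proof) $D_n$ is $k$-stacked, so one obtains \emph{four} pairwise distinct $k$-stacked $2k$-balls $D_n$, $-D_n$, $B^{2k,k}_n$, $-B^{2k,k}_n$ all bounding $\Delta^{2k-1}_n$. Your approach does not recover this.
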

\begin{proof}
To prove the statement, it suffices to construct a combinatorial $2k$-ball $D_n$ that satisfies the following properties:
\begin{itemize}
\item  $D_n \supseteq -B^{2k,k-1}_n$.
\item $D_n$ and $-D_n$ share no common facets and their union is $\Delta^{2k}_n$; in particular, $\pm D_n\subseteq \Delta^{2k}_n$.
\item $\partial D_n=\partial(-D_{n})=\Delta^{2k-1}_{n}$, and so $\Delta^{2k-1}_n \subseteq \Delta^{2k}_n$.
\end{itemize}

We construct $D_n$ by induction on $n$. For the base case, define $D_{2k+1}:=B^{2k,k}_{2k+1}$. 
This ball has all the desired properties: this follows from 
Lemma \ref{lm: property of B^{2k-1,k-1}_n}, Corollary \ref{cor: Delta is k-stacked}, and Remark \ref{rm:special-ball}.

For the inductive step, assume that there exists a combinatorial $2k$-ball $D_n$ that satisfies all of the above properties. In particular, $-D_n$ contains $B^{2k,k-1}_n$ as a subcomplex.
Recall also that  $\partial B^{2k, k-1}_n \supseteq B^{2k-1, k-1}_n$ (see Lemma \ref{lm: inclusion}). 
Thus, the following complex is well-defined:
\begin{equation}\label{-D_n}
		D_{n+1}:=\left( (-D_n) \backslash B^{2k, k-1}_n \right) \cup \left( (-B^{2k-1, k-1}_n)*(-v_{n+1})\right) \cup \left( \big((\partial B^{2k, k-1}_n)\backslash B^{2k-1, k-1}_n\big) *v_{n+1}\right).
	\end{equation}

This definition along with the inductive assumption that $D_n$ and $-D_n$ share no common facets implies that $D_{n+1}$ and $-D_{n+1}$ share no common facets either. Further, since $\partial B^{2k, k-1}_n$ contains $B^{2k-1,k-2}_n$ (see Lemma \ref{lm: inclusion}) and since $B^{2k-1,k-2}_n$ and $B^{2k-1, k-1}_n$ share no common facets (this follows from the nesting property), equation \eqref{-D_n} guarantees that 
\[
D_{n+1} \supseteq \left((-B^{2k-1, k-1}_n)*(-v_{n+1})\right) \cup \left(B^{2k-1,k-2}_n*v_{n+1}\right)=-B^{2k,k-1}_{n+1}.
\]

Recall that $\Delta^{2k}_{n+1}$ is obtained from $\Delta^{2k}_{n}$ by replacing $\pm B^{2k, k-1}_n$ with $\pm (\partial B^{2k, k-1}_n *v_{n+1})$. This together with the definition of $D_{n+1}$ and the inductive hypothesis asserting that $D_n\cup (-D_n)=\Delta^{2k}_n$ implies that $D_{n+1}\cup (-D_{n+1})=\Delta^{2k}_{n+1}$.

It only remains to show that $D_{n+1}$ is a combinatorial $2k$-ball with $\partial D_{n+1}=\partial(-D_{n+1})=\Delta^{2k-1}_{n+1}$. We use eq.~\eqref{-D_n} and the inductive hypothesis that $\partial(-D_n)=\Delta^{2k-1}_n$. First, the boundary of $\partial B^{2k, k-1}_n * v_{n+1}$ coincides with the boundary of $B^{2k, k-1}_n$. Thus, replacing the subcomplex $B^{2k, k-1}_n$ of the combinatorial ball $-D_n$ with $\partial B^{2k, k-1}_n * v_{n+1}$ results in a combinatorial ball $D'_{n+1}$ that has the same boundary as $-D_n$. The ball $D'_{n+1}$ has $v_{n+1}$ as an interior vertex whose link in $D'_{n+1}$ is $\partial B^{2k, k-1}_n$. Now, $B^{2k-1,k-1}_n$ is contained in both $\partial B^{2k, k-1}_n$ and $\partial D'_{n+1}=\partial(-D_n)=\Delta^{2k-1}_n$. Since $v_{n+1}$ is an interior vertex of $D'_{n+1}$, removing $B^{2k-1,k-1}_n*v_{n+1}$ from $D'_{n+1}$ results in a combinatorial ball $D''_{n+1}$ whose boundary is obtained from that of $-D_n$ by replacing $B^{2k-1,k-1}_n$ with $\partial B^{2k-1,k-1}_n  * v_{n+1}$. Finally, the balls $(-B^{2k-1, k-1}_n)*(-v_{n+1})$ and $D''_{n+1}$ intersect along $-B^{2k-1, k-1}_n$, which is a subcomplex of their boundaries. This yields that $D_{n+1}=D''_{n+1} \cup \left((-B^{2k-1, k-1}_n)*(-v_{n+1})\right)$ is a combinatorial ball. Further, the boundary of $D_{n+1}$ is obtained from the boundary of $D''_{n+1}$ by replacing $-B^{2k-1, k-1}_n$ with $\partial (-B^{2k-1, k-1}_n)*(-v_{n+1})$. We conclude that $\partial D_{n+1}=\Delta^{2k-1}_{n+1}$.
\end{proof}

Observe that the $2k$-ball $D_n$ constructed in the proof of Proposition \ref{prop: inclusion} is $k$-stacked. (Indeed, since the boundary of $D_n$ is a cs-$k$-neighborly complex w.r.t.~$V_n$, no face of $D_n$ of dimension $\leq k-1$ is an interior face of $D_n$, and so all interior faces of $D_n$ have dimension $\geq k=2k-k$.) Thus one curious consequence of Proposition \ref{prop: inclusion} and Corollary \ref{cor: Delta is k-stacked} is that for $n>2k+1$, $\Delta^{2k-1}_n$ is the boundary complex of at least four distinct $k$-stacked $2k$-balls, namely, $D_n$, $-D_n$, $B^{2k,k}_n$, and $-B^{2k,k}_n$.

\section{Closing remarks and questions}
We close with a few open questions.

The spheres $\Delta^{d}_n$ we constructed here are cs combinatorial spheres that are cs-$\lceil d/2 \rceil$-neighborly. Hence, according to \cite{McMShep}, for $d\geq 3$ and $n\geq d+3$, they are not the boundary complexes of cs polytopes. In fact, it follows from results of Pfeifle \cite[Chapter 10]{Pfeifle} that they are not even \emph{cs fans} (at least for $d$ big enough). However, these spheres might still possess some additional ``liked by all" properties:
\begin{problem} Let $d\geq 3$ and $n\geq d+3$.
	Are the spheres $\Delta^{d}_n$ vertex decomposable or at least shellable? Are they realizable as (non-cs) fans? Are they even realizable as boundary complexes of (non-cs) polytopes?
\end{problem}

It is well-known that there are many (non-cs) neighborly polytopes and neighborly spheres. For example, the number of combinatorial types of $\lceil d/2\rceil$-neighborly $(d+1)$-polytopes with $n$ vertices is at least $n^{\frac{d-1}{2}n(1+o(1))}$ for $d> 1$ and $n\to \infty$, see \cite[Section 6]{Padrol-13}. Are there many cs $d$-spheres that are cs-$\lceil d/2\rceil$-neighborly?
\begin{problem}
Find many new constructions of cs combinatorial $d$-spheres that are cs-$\lceil d/2\rceil$-neighborly.
\end{problem}

Finally, it is worth mentioning that in a contrast with cs combinatorial spheres, there exist cs combinatorial $2k$-manifolds with $n>2(2k+1)$ vertices that are cs-$(k+1)$-neighborly. (The interest in such complexes arises in part from Sparla's conjecture \cite{Sparla97,Sparla98} that posits an upper bound on the Euler characteristic of cs combinatorial $2k$-manifolds with $2n$ vertices; see \cite{N05} for some results on this conjecture.) One construction of such an infinite family is given in \cite{KleeN}: for each $k\geq 1$, it produces a cs triangulation of the product of two $k$-dimensional spheres with $4k+4$ vertices that is cs-$(k+1)$-neighborly. For additional constructions in low dimensions, see \cite{Effenberger,  Lutz, Sparla97}.

\begin{problem}
	Find new constructions of (infinite families of) cs combinatorial $2k$-manifolds that are cs-$(k+1)$-neighborly.
\end{problem}

\subsection*{Acknowledgments} 
We are grateful to the referee for suggesting an approach that greatly simplified and streamlined our main definitions and presentation.

{\small
	\bibliography{refs}
	\bibliographystyle{plain}
}

\end{document}